\newtheorem{theorem}{Theorem}[section]
\newtheorem{example}[theorem]{Example}
\newtheorem{definition}[theorem]{Definition}
\newtheorem{proposition}[theorem]{Proposition}
\newtheorem{lemma}[theorem]{Lemma}
\newtheorem{corollary}[theorem]{Corollary}
\newcommand*{\circled}[1]{\lower.7ex\hbox{\tikz\draw (0pt, 0pt)%
		circle (.5em) node {\makebox[1em][c]{\small #1}};}}
\newcommand{\id}{\mathrm{Id}}
\newcommand{\tip}{\mathrm{Tip}}
\newcommand{\ntip}{\mathrm{NonTip}}
\newcommand{\qa}{\mathbbm{k}Q/I}
\newcommand{\tor}{\mathrm{Tor}}
\newcommand{\ext}{\mathrm{Ext}}
\newcommand{\supp}{\mathrm{Supp}}
\begin{document}
	
	\title{\bf \Large A note on higher structures on the complexes associated to quiver algebras with applications to toupie algebras}
	\author{Yuming Liu$^a$ and Bohan Xing$^{a,*}$}
	\maketitle
	
	\renewcommand{\thefootnote}{\alph{footnote}}
	\setcounter{footnote}{-1} \footnote{\it{Mathematics Subject
			Classification(2020)}: 16E45, 18G15.}
	\renewcommand{\thefootnote}{\alph{footnote}}
	\setcounter{footnote}{-1} \footnote{\it{Keywords}: Higher structure; Algebraic Morse theory; Toupie algebra; Yoneda algebra.}
	\setcounter{footnote}{-1} \footnote{$^a$School of Mathematical Sciences, Laboratory of Mathematics and Complex Systems, Beijing Normal University,
		Beijing 100875,  P. R. China. E-mail: ymliu@bnu.edu.cn (Y.M. Liu); bhxing@mail.bnu.edu.cn (B.H. Xing).}
	\setcounter{footnote}{-1} \footnote{$^*$Corresponding author.}
	
	{\noindent\small{\bf Abstract:} In this paper, we summarize a general method of transforming DG structures into higher structures on the various complexes related to the reduced bar resolution of a given quiver algebra using algebraic Morse theory. As an application, we describe the $A_\infty$-structures of toupie algebras. Additionally, for certain special toupie algebras, we also prove that their double homological duals are isomorphic to their associated graded algebras.}

	\section{Introduction}	
	
	It is well-known that higher structures nowadays play important roles in the research of representation theory. For example, Keller provided motivation to use the $A_\infty$-structure to reconstruct a complex by its homology in \cite{Keller}; Tamaroff used the $A_\infty$-coalgebra structure to develop a calculation method for the $A_\infty$-algebra structure of monomial algebras in \cite{TAMA}. Moreover, there is also a focus of study on $L_\infty$-structures since $L_\infty$-algebras are a natural generalization that appears to avoid the rigidity of classical DG Lie algebras and helpful for dealing with deformation problems (see for example in \cite{BW}).
	
	The higher structures that we can construct now mainly come from transfer theories. In  classic transfer theories, a (strong) deformation retract datum is needed to obtain  higher structures on the given complex. A synthesis of these methods can be found in \cite{LV}. More specific formulas can be found in \cite{MM} for the $A_\infty$ case and in \cite{RR} for the $L_\infty$ case.
	 
	One aim of the  present paper is to summarize a general method for transforming DG structures into higher structures on the complexes associated to quiver algebras using algebraic Morse theory, as algebraic Morse theory can provide us a natural strong deformation retract datum from some complex associated to a quiver algebra.
	
	About fifteen years ago, algebraic Morse theory was developed by Kozlov \cite{K}, by Sk\"{o}ldberg \cite{ES}, and by J\"{o}llenbeck and Welker \cite{JW}, independently. Since then, this theory has been widely used in algebra. For additional  references in this area, see the introduction of the recent paper \cite{CLZ}. This theory provides a natural strong deformation retract datum from some complex associated to a quiver algebra to its Morse complex.	
	In particular, Sk{\"o}ldberg \cite{ES} applied this theory to construct the so-called two-sided Anick resolution from the reduced bar resolution of an non-commutative polynomial algebra. Chen, Liu and Zhou \cite{CLZ} generalized the two-sided Anick resolution from non-commutative polynomial algebras to algebras given by quivers with relations. It follows that there exists a strong deformation retract datum from the bar complexes of a quiver algebra to its two-sided Anick resolution. Since we have plenty of DG structures on the bar complex of a quiver algebra, we can obtain the corresponding higher structures through transfer theories (see Proposition \ref{quiver-trans}).
	
	For toupie algebras, which are a generalization of canonical algebras, we apply our method to study their higher structures.  By using algebraic Morse theory, we give the specific strong deformation retract datum between their reduced bar resolutions and their two-sided Anick resolutions (see Lemma \ref{sdr of tor}). Then, we establish the specific $A_\infty$-coalgebra structure on their Tor complexes (see Theorem \ref{co-A}) and the $A_\infty$-algebra structure on their Ext complexes (see Corollary \ref{Toupie-Ainf}) through the transfer theories. This will also give us a specific multiplication structure on the Yoneda algebras of toupie algebras. Since we have verified that toupie algebras are quasi-hereditary and \cite[~Theorem~2.2.1]{CPS} states that if an algebra $A$ is quasi-hereditary under some special conditions, the double homological dual of $A$ is isomorphic to its associated graded algebra $gr(A)$, we also present the specific structure of the associated graded algebras of toupie algebras (see Lemma \ref{grA}) and prove that the double homological duals of toupie algebras are isomorphic to their associated graded algebras in some special cases (see Theorem \ref{double-dual}).

	\medskip
	\textbf{Outline.} \;In Section 2, we recall some commonly used transfer theories for higher structures. In Section 3, we give an SDR datum using algebraic Morse theory and summarize a general method for transforming DG structures into higher structures on the complexes associated to quiver algebras. In section 4, for toupie algebras, we give the specific $A_\infty$-coalgebra structure on their Tor complexes and the $A_\infty$-algebra structure on their Ext complexes (which will also induce the Yoneda algebra structure particularly). Moreover, we present the specific structure of the associated graded algebras of toupie algebras and prove that the double homological duals of toupie algebras are isomorphic to their associated graded algebras in some special cases.
		
	\section{Higher structures and transfer theories}
	
	Now we give a brief introduction about the most commonly used higher structures. For convenience, we fix a field $\mathbbm{k}$ of characteristic zero as ground field and $\otimes=\otimes_\mathbbm{k}$.
		
	\subsection{$A_{\infty}$-structures and transfer theories}
		
	In this section, we review the definition of the $A_\infty$-algebra and the transfer theory.
		
	\begin{definition}$(\cite[~Definition~1.1]{LPWZ})$
		An $A_\infty$-algebra over a base field $\mathbbm{k}$ is a $\mathbb{Z}$-graded vector space $A=\oplus_{p\in\mathbb{Z}}A^p$ endowed with a family of graded $\mathbbm{k}$-linear maps $$m_n:\; A^{\otimes n}\rightarrow A,\quad n\geq 1,$$ 
		of degree $2-n$ satisfying the following Stasheff identities:
		\begin{equation}
			\sum_{r+s+t=n,\;r,t\geq 0,\; s\geq 1}(-1)^{r+st}m_{r+1+t}(\id^{\otimes r}\otimes m_s\otimes \id^{\otimes t})=0 \tag{SI(n)}
		\end{equation}
		for all $n \geq 1$ and $\id$ denotes the identity map of $A$.	
	\end{definition}
			
	Note that when the formulas above are applied to element, additional signs appear due to the Koszul sign rule. Actually, SI(1) is $m_1m_1=0$ and SI(2) is $m_1m_2=m_2(m_1\otimes\id+\id\otimes m_1)$, so $m_1$ is a differential on $A$ and is a graded derivation with respect to $m_2$. Moreover, the identity SI(3) is 
	$$m_2(\id\otimes m_2-m_2\otimes\id)=m_1m_3+m_3(m_1\otimes\id\otimes\id+\id\otimes m_1\otimes\id+\id\otimes\id\otimes m_1).$$
	If either $m_1$ or $m_3$ is zero, then $m_2$ is associative. In general, $m_2$ is associative up to the chain homotopy given by $m_3$. In particular, $A$ is a differential graded (aka. DG) associative algebra if $m_i=0$, $i\geq 3$.
	
	Dually, we can also define the $A_\infty$-structure in a coalgebra version.
	
	\begin{definition}$(\cite[~Section~2.5]{TAMA})$
		An $A_\infty$-coalgebra over a base field $\mathbbm{k}$ is a $\mathbb{Z}$-graded vector space $V=\oplus_{p\in\mathbb{Z}}V^p$ endowed with a family of graded $\mathbbm{k}$-linear maps $$\Delta_n:\; V\rightarrow V^{\otimes n},\quad n\geq 1,$$ 
		of degree $n-2$ satisfying the following Stasheff identities:
		\begin{equation}
			\sum_{r+s+t=n,\;r,t\geq 0,\; s\geq 1}(-1)^{r+st}(\id^{\otimes r}\otimes \Delta_s\otimes \id^{\otimes t})\Delta_{r+1+t}=0 \tag{SI(n)'}
		\end{equation}
		for all $n \geq 1$ and $\id$ denotes the identity map of $V$. 	
	\end{definition}
			
	Moreover, the sequence of maps be locally finite means that for each element $v\in V$, the set $\{\Delta_n(v)\;|\; n\geq 1\}$ contains finitely many nonzero terms. We also call $V$  a DG associative coalgebra if $\Delta_i=0$ for $i\geq 3$.
	
	Now we recall the transfer theory for $A_\infty$-structure in \cite{MM}, which can also be found in \cite{LV}. 
	
	\begin{definition}
		Given chain complexes $W_1,W_2$ and chain maps $p:\;W_1\rightarrow W_2$, $i:\; W_2\rightarrow W_1$. We say that $W_2$ is a deformation retract of $W_1$, if there is a homotopy retract datum 
		
		$$
		\begin{tikzcd}
			W_1 \arrow[r, "p", shift left=2] \arrow["h"', loop, distance=2em, in=215, out=145] & W_2 \arrow[l, "i", shift left]
		\end{tikzcd}$$
		with $\id_{W_1}-ip=dh+hd$ and $\id_{W_2}=pi$. Moreover, we call this datum a strong deformation retract datum (aka. SDR datum), if $h^2=0, hi=0, ph=0$.
	\end{definition}
			
	There are also two versions of the transfer theory for $A_\infty$-structure.
	
	\begin{theorem}$(\cite[~Theorem~5]{MM})$
		Let $(A,m_1,m_2)$ be a DG associative algebra and consider a homotopy retract datum 
			$$
		\begin{tikzcd}
			A \arrow[r, "p", shift left=2] \arrow["h"', loop, distance=2em, in=215, out=145] & A' \arrow[l, "i", shift left]
		\end{tikzcd}$$
		There exists an $A_\infty$-algebra structure on $A'$ which is given by $m_1'=d_{A'}$ and, for $n\geq 2$, by $m_n'=pm_ni^{\otimes n}$. To be specific, for $n\geq 3$, $m_n:\;A^{\otimes n}\rightarrow A$ are defined by 
		$$m_n=\sum_{s+t=n,\; s,t>0}(-1)^{s(t+1)}m_2(hm_s\otimes hm_t),$$
		with the convention that $hm_1=\id$.
	\end{theorem}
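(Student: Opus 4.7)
The plan is to verify the Stasheff identities SI$(n)$ for the transferred operations $m'_n = p m_n i^{\otimes n}$ by induction on $n$, where $m_n\colon A^{\otimes n}\to A$ is defined recursively as in the statement. The base cases are quick: SI$(1)$ reduces to $(d_{A'})^2 = 0$, and SI$(2)$ follows, via the chain-map property of $p$ and $i$ together with SI$(2)$ for $(A,m_1,m_2)$, once we observe that $m'_2 = p\,m_2\,(i\otimes i)$ inherits the graded-derivation property of $m_1$ under $p$ and $i$.

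For the inductive step, I would first unfold the recursion into a combinatorial sum over planar binary rooted trees $T$ with $n$ leaves: each such $T$ contributes an operator built by placing $m_2$ at every internal vertex, $h$ on every internal edge, $i$ at every leaf, and $p$ at the root, together with a sign read off from the nested factors $(-1)^{s(t+1)}$. With this description, each summand
$$m'_{r+1+t}\bigl(\id^{\otimes r}\otimes m'_s\otimes\id^{\otimes t}\bigr)$$
appearing in SI$(n)$ corresponds to grafting an inner tree of $s$ leaves onto the $(r+1)$-st leaf of an outer tree of $r+1+t$ leaves; at the grafting point the operator $i\circ p\colon A\to A$ appears, while all other internal edges still carry $h$. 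The full Stasheff sum is thereby indexed by pairs (tree with $n$ leaves, distinguished internal edge).

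Next I would substitute the SDR relation $ip = \id_A - dh - hd$ at the distinguished edge. The three resulting contributions can be processed using the associativity of $m_2$ at the two vertices adjacent to that edge, and the fact that $d = m_1$ is a graded derivation for $m_2$, so that $d$ and $h$ move from the distinguished edge onto neighbouring vertices and edges. A careful tracking shows that the $\id_A$-term and the $dh$, $hd$-terms combine into pairs of trees of neighbouring shapes with opposite signs, producing a telescoping cancellation whose total is zero.

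The main obstacle, as in every proof of a Kadeishvili-type transfer theorem, is the sign bookkeeping: the prefactors $(-1)^{s(t+1)}$ in the recursion and $(-1)^{r+st}$ in SI$(n)$ are engineered precisely so that the Koszul signs arising from moving $d$ and $h$ past operators of various internal degrees combine with the threefold splitting to yield zero. Organising the argument through the tree picture makes the cancellations visible at the level of pairs of trees; the fine-grained sign analysis is then imported from \cite{MM}. For our applications we use this theorem as a black box in order to transfer DG-algebra structures on bar-type complexes of quiver algebras to their Anick-type strong deformation retracts.
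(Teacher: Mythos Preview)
The paper does not give its own proof of this theorem: it is quoted verbatim from \cite[~Theorem~5]{MM} and used as a black box, exactly as you yourself note in your final sentence. There is therefore nothing in the paper to compare your proposal against.

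Your outline is the standard tree-based argument for Kadeishvili--Markl transfer and is consistent with the proof in \cite{MM}: expand the recursion as a sum over planar binary trees with $m_2$ at internal vertices, $h$ on internal edges, $i$ at leaves and $p$ at the root; interpret the Stasheff sum as grafting with an $ip$ at the grafting edge; substitute the homotopy relation $ip=\id_A-m_1h-hm_1$; then use associativity of $m_2$ and the derivation property of $m_1$ to obtain telescoping cancellations. One small terminological slip: you call the relation you substitute the ``SDR relation'', but the theorem as stated only assumes a homotopy retract datum, and the identity $ip=\id_A-m_1h-hm_1$ already holds at that level; the side conditions $h^2=0$, $hi=0$, $ph=0$ are not invoked in your sketch and are not needed for this statement. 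For the purposes of the present paper, citing \cite{MM} is sufficient.
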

			
	\begin{theorem}$(\cite[~Theorem~2.4]{TAMA})$\label{trans coalg}
		Let $(V,\Delta_1,\Delta_2)$ be a DG associative coalgebra and consider a homotopy retract datum 
		$$
		\begin{tikzcd}
			V \arrow[r, "p", shift left=2] \arrow["h"', loop, distance=2em, in=215, out=145] & V' \arrow[l, "i", shift left]
		\end{tikzcd}$$
		There exists an $A_\infty$-coalgebra structure on $V'$ which is given by $\Delta_1'=d_{V'}$ and, for $n\geq 2$, by $\Delta_n'=p^{\otimes n}\Delta_ni$. To be specific, for $n\geq 3$, $\Delta_n:\;V\rightarrow V^{\otimes n}$ are defined by 
		$$\Delta_n=\sum_{s+t=n,\; s,t>0}(-1)^{s(t+1)}(\Delta_sh\otimes \Delta_th)\Delta_2,$$
		with the convention that $\Delta_1h=\id$.
	\end{theorem}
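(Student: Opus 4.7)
The plan is to verify directly that the family $\{\Delta_n'\}_{n\ge 1}$ defined in the statement satisfies the Stasheff identities SI(n)$'$, mirroring the inductive argument that establishes the preceding $A_\infty$-algebra transfer theorem of Markl but with all arrows reversed; indeed, the recursive formula for $\Delta_n$ is precisely the opposite-category version of the formula $m_n = \sum (-1)^{s(t+1)} m_2(h m_s \otimes h m_t)$ for the algebra case, so the overall scaffolding of the argument can be transposed essentially verbatim.

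First I would dispose of the low-degree identities and the degree check. A quick count shows that $\Delta_n$, defined recursively, has degree $n-2$: the summand $(\Delta_s h \otimes \Delta_t h)\Delta_2$ has degree $(s-2+1)+(t-2+1)+0 = n-2$, with the convention $\Delta_1 h = \id$ contributing degree $0$. The identity SI(1)$'$ reduces to $(d_{V'})^2 = 0$, while SI(2)$'$ says $\Delta_2'$ is a chain map; both follow from $p, i$ being chain maps, $pi = \id_{V'}$, and $\Delta_2$ being a chain map on $V$. For the inductive step at $n \geq 3$, I would substitute $\Delta_k' = p^{\otimes k}\Delta_k i$ into the left-hand side of SI(n)$'$, expand via the recursion, and simplify using the SDR identities $\id_V - ip = dh + hd$, $pi = \id_{V'}$, $ph = 0$, $hi = 0$, $h^2 = 0$, together with coassociativity of $\Delta_2$ and the fact that $\Delta_1$ is a coderivation for $\Delta_2$. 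The cancellations are best organized by first proving an auxiliary identity on $V$ itself — namely, that $\sum_{r+s+t=n}(-1)^{r+st}(\id^{\otimes r}\otimes \Delta_s \otimes \id^{\otimes t})\Delta_{r+1+t}$ equals an explicit $h$-boundary — and then sandwiching this identity between $i$ on the right and $p^{\otimes n}$ on the left to annihilate the $h$-terms via $ph = 0$ and $hi = 0$.

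The main obstacle is sign bookkeeping: the Koszul signs arising from the degree $+1$ of $h$, the varying degrees $k-2$ of the $\Delta_k$, the Stasheff sign $(-1)^{r+st}$, and the recursion sign $(-1)^{s(t+1)}$ must all align for the pairwise cancellations to occur, and in the coalgebra version an additional sign swap enters through the Koszul rule whenever tensor factors are threaded past one another in $V^{\otimes n}$. A conceptually cleaner route that largely bypasses this bookkeeping is to pass to the cobar construction: an $A_\infty$-coalgebra structure on $V$ is equivalent to a square-zero degree $+1$ derivation on the free tensor algebra $T(s^{-1}V)$, the SDR $(p,i,h)$ extends to an SDR between $T(s^{-1}V)$ and $T(s^{-1}V')$ via the standard tensor trick, and the theorem becomes an instance of the homological perturbation lemma. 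Unwinding the perturbed derivation in terms of planar binary trees with $n$ leaves then reproduces exactly the signed recursive formula for $\Delta_n$, with the root splitting of each tree corresponding to the $(s,t)$-decomposition in the sum.
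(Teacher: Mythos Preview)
The paper does not supply its own proof of this theorem: it is quoted verbatim as a background result from Tamaroff \cite[Theorem~2.4]{TAMA}, in parallel with the preceding $A_\infty$-algebra transfer theorem quoted from Markl \cite{MM}. There is therefore no in-paper argument to compare your proposal against.

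That said, your sketch is a sound outline of how the result is established in the literature. Both routes you describe --- the direct dualisation of Markl's inductive verification of the Stasheff identities, and the conceptual reformulation via the cobar construction together with the homological perturbation lemma --- are standard and lead to the stated recursive formula. One small caveat: in your inductive step you freely invoke the side conditions $ph=0$, $hi=0$, $h^2=0$, whereas the hypothesis of the theorem is only a homotopy retract datum, not an SDR. This is harmless in practice (any homotopy retract can be modified to a strong deformation retract without changing $p$ and $i$, cf.\ \cite{LV}), but you should say so explicitly before using those identities, since the cancellations you rely on genuinely fail for an unadjusted $h$.
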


	\subsection{$L_{\infty}$-structure and transfer theory}		
	
	In this section, we introduce the notions of $L_\infty$-algebra. Let $V$ be a $\mathbb{Z}$-graded vector space. Denote by $\wedge V$ the free graded-commutative associative algebra over $V$. Let $v_1,\cdots,v_n\in V$ be homogeneous elements and $v_1\wedge \cdots\wedge v_n$ be its product in $\wedge V$.
	
	Let $\sigma\in \mathbb{S}_n$ be a permutation and $v_1,\cdots,v_n\in V$ homogeneous elements. The Koszul sign $\epsilon(\sigma)$=$\epsilon(\sigma;v_1,\cdots,v_n)\in\{\pm1\}$ is defined by $$v_1\wedge \cdots\wedge v_n=\epsilon(\sigma)v_{\sigma(1)}\wedge \cdots\wedge v_{\sigma(n)}.$$ The antisymmetric Koszul sign $\chi(\sigma)=\chi(\sigma;v_1,\cdots,v_n)\in\{\pm1\}$ is defined by $$\chi(\sigma)=sgn(\sigma)\epsilon(\sigma;v_1,\cdots,v_n)$$ where $sgn(\sigma)$ denotes the sign of the permutation $\sigma$. For any $t$ with $1\leq t\leq n$, $\kappa(\sigma)_t=\kappa(\sigma,t;v_1,\cdots,v_n)\in\{\pm1\}$ is given by 
	$$\kappa(\sigma)_t=(-1)^{(t-1)+(n-t-1)(\sum_{i=1}^{t}|v_{\sigma(p)}|)}$$
	where we write $|v_{\sigma(p)}|$ for the degree of the homogeneous element $v_{\sigma(p)}$.
	
	Let $\mathbb{S}_{t,n-t}$ be the set of all $(t,n-t)$-unshuffles in the symmetric group $\mathbb{S}_n$, that is,
	$$\mathbb{S}_{t,n-t}=\{\sigma\in\mathbb{S}_n\;|\; \sigma(1)<\cdots<\sigma(t),\; \sigma(t+1)<\cdots<\sigma(n)\}.$$
	Let $\mathbb{S}^-_{t,n-t}$ be the set of all $\sigma\in\mathbb{S}_{t,n-t}$ such that $\sigma(1)<\sigma(t+1)$. Analogously, for $i+j+k=n$ we define
	$$\mathbb{S}_{i,j,k}=\{\sigma\in\mathbb{S}_n\;|\; \sigma(1)<\cdots<\sigma(i),\;\sigma(i+1)<\cdots<\sigma(i+j),\; \sigma(i+j+1)<\cdots<\sigma(n)\}$$
	and similarly we define $\mathbb{S}^-_{i,j,k}$ be the set of all $\sigma\in\mathbb{S}_{i,j,k}$ such that $\sigma(1)<\sigma(i+1)<\sigma(i+j+1)$.
	
	\begin{definition}$(\cite[~Definition~2.2]{RR})$
		An $L_\infty$-algebra is a $\mathbb{Z}$-graded vector space $L$ together with linear maps $$l_n:\; L^{\otimes n}\rightarrow L$$ of degree $2-n$ such that, for every $n\in\mathbb{N}$ and homogeneous elements $v_1,\cdots,v_n\in L$, the following conditions are satisfied:
		$$l_n\hat{\sigma}=\chi(\sigma)l_n,\quad \forall\sigma\in\mathbb{S}_n;$$
		\begin{equation}
			\sum_{i+j=n+1}\sum_{\sigma\in\mathbb{S}_{i,n-i}}(-1)^{i(j-1)}\chi(\sigma)l_j(l_i\otimes\id^{\otimes n-i})\hat{\sigma}=0, \tag{JI(n)}
		\end{equation}
		for all $n \geq 1$ where $\hat{\sigma}(v_1\otimes\cdots\otimes v_n)=v_{\sigma(1)}\otimes\cdots\otimes v_{\sigma(n)}$ and $\id$ denotes the identity map of $L$.
	\end{definition}
	
	Observe that for $n=2$, the equations in the previous definition are
	$$l_2(v_2\otimes v_1)=-(-1)^{|v_1||v_2|}l_2(v_1\otimes v_2),$$
	$$l_1l_2(v_1\otimes v_2)=l_2(l_1(v_1)\otimes v_2)+(-1)^{|v_1|}l_2(v_1\otimes l_1(v_2)).$$
	Moreover, when $n=3$ and $l_3=0$, we get the Jacobi identity
	$$\sum_{\sigma\in\mathbb{S}_{2,1}}\chi(\sigma)l_2(l_2\otimes \id)\hat{\sigma}=0.$$
	We also call $L$  a DG Lie algebra if $l_i=0$ for $i\geq 3$.
	
	\begin{theorem}$(\cite[~Theorem~2.7]{RR})$
		Let $(L,d,[-,-])$ be a DG Lie algebra and consider a SDR datum 
		$$
		\begin{tikzcd}
			L \arrow[r, "p", shift left=2] \arrow["h"', loop, distance=2em, in=215, out=145] & L' \arrow[l, "i", shift left]
		\end{tikzcd}$$	
		Define the linear maps of degree $2-n$
		$$l_n:\;L'^{\otimes n}\rightarrow L',\quad v_n:\; L'^{\otimes n}\rightarrow L$$
		and the linear maps of degree $1-n$
		$$\phi_n:\;L'^{\otimes n}\rightarrow L$$
		by $l_1=d_{L'}$, $v_1=0$, $\phi_1=i$. For $n>1$ and homogeneous elements $f_1,\cdots,f_n\in L'$, we define the map as follows:
		$$v_n=\sum_{t=1}^{n-1}\sum_{\tau\in\mathbb{S}_{t,n-t}^-}\chi(\tau)\kappa(\tau)_t[\phi_t,\phi_{n-t}]\hat{\tau},$$
		$$\phi_n=hv_n,$$
		$$l_n=pv_n,$$
		where $$\hat{\tau}(f_1\otimes\cdots\otimes f_n)=f_{\tau(1)}\otimes\cdots\otimes f_{\tau(n)},\;\; and$$
		$$[\phi_t,\phi_{n-t}](f_1\otimes\cdots\otimes f_n)=[\phi_t(f_1\otimes\cdots\otimes f_t),\phi_{n-t}(f_{t+1}\otimes\cdots\otimes f_n)].$$
		Then $(L',\{l_n\}_{n\geq 1})$ gives an $L_\infty$-algebra structure on $L'$.
	\end{theorem}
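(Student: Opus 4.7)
The plan is to recast the statement in the language of coderivations on the reduced cofree cocommutative coalgebra $\bar{S}^c(L[1])$. An $L_\infty$-structure on a graded space $W$ is equivalent to a square-zero degree $+1$ coderivation $D$ on $\bar{S}^c(W[1])$; the brackets $l_n$ are recovered (up to a d\'ecalage sign) from the Taylor components $\pi \circ D \circ \iota_n$, where $\pi$ is the projection onto the cogenerators and $\iota_n$ is the inclusion of $W[1]^{\odot n}$. In this language, the DG Lie structure on $L$ becomes a coderivation $D_L = D_1 + D_2$ with $D_L^2 = 0$, where $D_1$ lifts $d$ and $D_2$ lifts the bracket $[-,-]$.

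Next, I would lift the given SDR to an SDR of DG coalgebras between $\bigl(\bar{S}^c(L[1]),\,D_1\bigr)$ and $\bigl(\bar{S}^c(L'[1]),\,d_{L'}\bigr)$. The chain maps $i$ and $p$ extend uniquely to coalgebra morphisms by the universal property of cofree coalgebras, and the contracting homotopy $h$ extends via the standard tensor-trick formula to a homotopy $H$; the side conditions $h^2 = 0$, $hi = 0$, $ph = 0$ carry over to $H$. Apply the homological perturbation lemma with perturbation $D_2$ to obtain a square-zero degree $+1$ coderivation $D_{L'}$ on $\bar{S}^c(L'[1])$, whose Taylor coefficients define the desired $L_\infty$-structure on $L'$. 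Local finiteness of the perturbation series is automatic because each copy of $D_2 H$ strictly shortens the symmetric word length.

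It then remains to identify the HPL output with the recursive formulas in the statement. Unfolding the perturbation series and grouping terms according to the outermost bracket, each summand corresponds to a pair of sub-trees glued by a single $[-,-]$; factoring the input permutation through the unique $(t,n-t)$-unshuffle with $\tau(1) < \tau(t+1)$ recovers the sum over $\mathbb{S}^-_{t,n-t}$, while the recursive appearance of $h$ at the top of each sub-tree together with the final projection $p$ reproduces $\phi_n = h v_n$ and $l_n = p v_n$. Graded antisymmetry $l_n \hat{\sigma} = \chi(\sigma) l_n$ is automatic from the coderivation viewpoint, and JI(n) reduces to the degree-$n$ component of $D_{L'}^2 = 0$.

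The main obstacle will be the sign bookkeeping: matching the Koszul sign of $\hat{\tau}$, the d\'ecalage sign from the shift $L \to L[1]$, and the antisymmetry of the bracket against the prescribed sign $\chi(\tau)\kappa(\tau)_t$ requires a careful case-by-case check. I would fix the sign convention once (measuring degrees in $L[1]$ throughout), verify it by hand for $n = 2, 3$, and then extend to general $n$ by the same combinatorial pattern, since every remaining identity is controlled by the same Koszul combinatorics and the single nontrivial relation $D_L^2 = 0$ on $L$.
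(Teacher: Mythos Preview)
The paper does not prove this theorem at all: it is stated as a quoted result from \cite{RR} (Theorem~2.7 there) and is used only as background, so there is no ``paper's own proof'' to compare your proposal against.

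That said, your proposed route is a standard and correct one. Encoding an $L_\infty$-structure as a square-zero coderivation on $\bar S^c(L[1])$, lifting the SDR to the symmetric coalgebra via the tensor-trick homotopy, and then invoking the homological perturbation lemma with perturbation $D_2$ is exactly the strategy in, e.g., Loday--Vallette \cite{LV}; the nilpotence of $D_2H$ on each fixed weight guarantees convergence, and unwinding the perturbed coderivation into trees produces the recursive formulas $\phi_n=hv_n$, $l_n=pv_n$ with the unshuffle sum over $\mathbb S^-_{t,n-t}$. Your cautionary remarks about the sign bookkeeping (d\'ecalage, Koszul, and the extra factor $\kappa(\tau)_t$) are well placed: this is where most proofs in the literature either defer to a convention fixed once and for all or verify $n=2,3$ by hand and appeal to naturality. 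If you want to sharpen the write-up, the one point worth making explicit is why the transferred coderivation is again a \emph{coderivation} (not just a degree~$+1$ map): this uses that $H$ is an $(I,P)$-coderivation homotopy and that $D_2$ is a coderivation, which is what forces the perturbed differential to respect the coproduct; without that remark the identification with an $L_\infty$-structure is not automatic.
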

	
		\section{Higher structures on the complexes associated to quiver algebras}	
		
		In this sections, we use algebraic Morse theory to construct an SDR datum of the complexes associated to quiver algebras. Then we can use the dg structure on them and the transfer theory we discussed above to get the higher structures on the Morse complexes.
		
		\subsection{An SDR datum from algebraic Morse theory}

		The most general version of algebraic Morse theory was presented in Chen, Liu and Zhou \cite{CLZ}. For our purpose, we will adopt to a Morse matching condition defined in \cite[Proposition 3.2]{CLZ}.
		
		Let $R$ be an associative ring and $C_{*}=(C_n, \partial_n)_{n\in\mathbb{Z}}$ be a chain complex of left $R$-modules. We assume that each $R$-module $C_n$ has a decomposition $C_n\simeq\oplus_{i\in I_n}C_{n,i}$ of $R$-modules, so we can regard the differentials $\partial_n$ as a matrix $\partial_n=(\partial_{n,ji})$ with $i\in I_n$ and $j\in I_{n-1}$ and where $\partial_{n,ji}:C_{n,i}\rightarrow C_{n-1,j}$ is a homomorphism of $R$-modules.
		
		Given the complex $C_*$ as above, we construct a weighted quiver $G(C_*):=(V,E)$. The set $V$ of vertices of $G(C_*)$ consists of the pairs $(n,i)$ with $n\in\mathbb{Z},i\in I_n$ and the set $E$ of weighted arrows is given by the rule:
		if the map $\partial_{n,ji}$ does not vanish, draw an arrow in E from $(n,i)$ to $(n-1,j)$ 
		and denote the weight of this arrow by the map $\partial_{n,ji}$.
		
		A full subquiver $\mathcal{M}$ of the weighted quiver $G(C_*)$ is called a partial matching if it satisfies the following two conditions:
		\begin{itemize}
			\item $(Matching)$ Each vertex in $V$ belongs to at most one arrow of $\mathcal{M}$.
			
			\item $(Invertibility)$ Each arrow in $\mathcal{M}$ has its weight invertible as a $R$-homomorphism.
			
		\end{itemize}
		With respect to a partial matching $\mathcal{M}$, we can define a new weighted quiver $G_{\mathcal{M}}(C_*)=(V,E_{\mathcal{M}})$, where $E_{\mathcal{M}}$ is given by 
		\begin{itemize}
			\item Keep everything for all arrows which are not in $\mathcal{M}$ and call them thick arrows.
			
			\item For an arrow in $\mathcal{M}$, replace it by a new dotted arrow in the reverse direction and the weight of this new arrow is the negative inverse of the weight of original arrow.
		\end{itemize}
		A path in  $G_{\mathcal{M}}(C_*)$ is called a zigzag path if dotted arrows and thick arrows appear alternately.
		
		Next, for convenience, we will introduce from J\"{o}llenbeck and Welker \cite{JW} the notations related to the weighted quiver $G(C_*)=(V,E)$ with a partial matching $\mathcal{M}$ on it.
		
		\begin{definition} 
			\begin{enumerate}[(1)]
				\item A vertex $(n,i)\in V$ is critical with respect to $\mathcal{M}$ if $(n,i)$ does not lie in any arrow in $\mathcal{M}$. Let $V_n$ denote all the vertices with the first number equal to $n$, we write
				$$V_n^{\mathcal{M}}:=\{(n,i)\in V_n\;|\;(n,i)\;is \; critical\}$$
				for the set of all critical vertices of homological degree $n$.
				
				\item Write $(m,j)\leq (n,i)$ if there exists an arrow from $(n,i)$ to $(m,j)$ in $G(C_*)$.
				
				\item Denote by $P((n,i),(m,j))$ the set of all zigzag paths from $(n,i)$ to $(m,j)$ in  $G_{\mathcal{M}}(C_*)$.
				
				\item The weight $w(p)$ of a path $$p=((n_1,i_1)\rightarrow (n_2,i_2)\rightarrow \cdots\rightarrow (n_r,i_r))\in P((n_1,i_1),(n_r,i_r))$$ in  $G_{\mathcal{M}}(C_*)$ is given by
				
				$$w(p):=w((n_{r-1},i_{r-1})\rightarrow (n_{r},i_{r}))\circ\cdots\circ w((n_1,i_1)\rightarrow (n_2,i_2)),$$
				
				$$
				w((n,i)\rightarrow (m,j)):=\left\{
				\begin{array}{*{3}{lll}}
					-\partial_{m,ij}^{-1}&,& (n,i)\leq (m,j),\\
					\partial_{n,ji}&,& (m,j)\leq (n,i).
				\end{array}
				\right.
				$$
				Then we write $\Gamma((n,i), (m,j))=\sum_{p\in P((n,i),(m,j))}w(p)$ for the sum of weights of all zigzag paths from $(n,i)$ to $(m,j)$.
				
			\end{enumerate}
		\end{definition}
		
		Following \cite[Proposition 3.2]{CLZ}, we call a partial matching $\mathcal{M}$ as above a Morse matching if any zigzag path starting from $(n,i)$ is of finite length for each vertex $(n,i)$ in $G_{\mathcal{M}}(C_*)$. 
		
		Now we can define a new complex $C_*^{\mathcal{M}}$, which we call the Morse complex of $C_*$ with respect to $\mathcal{M}$. The complex $C_*^{\mathcal{M}}=(C_n^{\mathcal{M}},\partial_n^{\mathcal{M}})_{n\in\mathbb{Z}}$ is defined by
		
		$$C_n^{\mathcal{M}}:=\oplus_{(n,i)\in V_n^{\mathcal{M}}}C_{n,i},$$
		
		$$
		\partial_n^{\mathcal{M}}:\left\{
		\begin{array}{*{3}{lll}}
			C_n^{\mathcal{M}}& \rightarrow& C_{n-1}^{\mathcal{M}}\\
			x\in C_{n,i}& \mapsto& \sum_{(n-1,j)\in V_{n-1}^{\mathcal{M}}}\Gamma((n,i),(n-1,j))(x).
		\end{array}
		\right.
		$$
		
		The main theorem of algebraic Morse theory can be stated as follows.
		
		\begin{theorem} $(\cite[~Theorem~3.3]{CLZ})$\label{morse}
			$C_*^{\mathcal{M}}$ is a complex of left $R$-modules which is homotopy equivalent to the original complex $C_*$. Moreover, the maps defined below 			
			$$
			p:\left\{
			\begin{array}{*{3}{lll}}
				C_n& \rightarrow& C_n^{\mathcal{M}}\\
				x\in C_{n,i}& \mapsto& \sum_{(n,j)\in V_{n}^{\mathcal{M}}}\Gamma((n,i),(n,j))(x),
			\end{array}
			\right.
			$$			
			$$
			i:\left\{
			\begin{array}{*{3}{lll}}
				C_n^{\mathcal{M}}& \rightarrow& C_n\\
				x\in C_{n,i}& \mapsto& \sum_{(n,j)\in V_{n}}\Gamma((n,i),(n,j))(x),
			\end{array}
			\right.
			$$
			$$
			h:\left\{
			\begin{array}{*{3}{lll}}
				C_n& \rightarrow& C_{n+1}\\
				x\in C_{n,i}& \mapsto& \sum_{(n+1,j)\in V_{n+1}}\Gamma((n,i),(n+1,j))(x)
			\end{array}
			\right.
			$$	
			give an SDR datum between the complexes $C_*$ and $C_*^\mathcal{M}$:
			$$
			\begin{tikzcd}
				C \arrow[r, "p", shift left=2] \arrow["h"', loop, distance=2em, in=215, out=145] & C^\mathcal{M} \arrow[l, "i", shift left]
			\end{tikzcd}$$				
		\end{theorem}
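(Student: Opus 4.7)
The plan is to reinterpret the zigzag-path sums as geometric series of operators and then verify the four SDR conditions algebraically. I would split $\partial=\partial^0+\partial^1$, where $\partial^0$ collects the components $\partial_{n,ji}$ indexed by matched arrows $(n,i)\to(n-1,j)\in\mathcal{M}$ and $\partial^1$ collects everything else; write $A,B,K\subseteq C_*$ for the sub-$R$-modules spanned by the matched sources, matched targets, and critical vertices, with projections $P_A,P_B,P_K$ summing to $\id$. The Matching and Invertibility axioms force $\partial^0$ to restrict to an $R$-linear isomorphism $A\xrightarrow{\sim}B$; let $h_0:C_*\to C_{*+1}$ be the degree-one operator that vanishes on $A\oplus K$ and equals the negated inverse of $\partial^0|_A$ on $B$. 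Then elementary identities such as $h_0^2=0$, $h_0\partial^0=-P_A$, $\partial^0 h_0=-P_B$, $h_0\iota_K=0$, and $P_K h_0=0$ hold, where $\iota_K:K\hookrightarrow C_*$ is the inclusion.

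Each zigzag path of length $r$ contributes to its weight an alternating product of $h_0$'s (for dotted steps) and components of $\partial^1$ (for thick steps). Summing over all such paths, and invoking the Morse matching condition to guarantee local finiteness, yields the closed forms
\[
p = P_K M,\qquad i = N\iota_K,\qquad h = N h_0 = h_0 M,\qquad \partial^{\mathcal{M}} = P_K\,\partial^1 N\iota_K,
\]
where $M=\sum_{k\geq 0}(\partial^1 h_0)^k$ and $N=\sum_{k\geq 0}(h_0\partial^1)^k$ are well-defined endomorphisms of $C_*$ satisfying $(\id-\partial^1 h_0)M=\id$, $(\id-h_0\partial^1)N=\id$, and the intertwining $M\partial^1=\partial^1 N$.

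With this setup the side conditions are short: $h^2=N h_0\cdot h_0 M=0$ and $ph=P_K h_0 M=0$ follow from $h_0^2=0$ and $P_K h_0=0$; $hi=N h_0 N\iota_K=0$ uses $h_0\iota_K=0$ together with the $A$-invariance of $N$; and $pi=\id_{C^{\mathcal{M}}}$ holds because $N\iota_K$ differs from $\iota_K$ only by an element of $A$, on which $p$ vanishes. Once the homotopy identity is in hand, the chain-map properties of $p,i$ and the differential identity $(\partial^{\mathcal{M}})^2=0$ follow formally from $\partial^2=0$, for instance via $(\partial^{\mathcal{M}})^2 = p\partial i\,p\partial i = p\partial(\id-\partial h-h\partial)\partial i = 0$.

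The main obstacle is the homotopy identity $\id-ip=\partial h+h\partial$. Expanding $\partial h+h\partial=\partial N h_0+N h_0\partial$ against $\partial=\partial^0+\partial^1$, and substituting $N h_0\partial^1=N-\id$, $\partial^1 N h_0=M-\id$, and $N h_0\partial^0=-N P_A$, collapses three of the four terms; the residual term $\partial^0 N h_0$ is then analyzed using the relation $\partial^0\partial^1+\partial^1\partial^0+(\partial^1)^2=0$ (extracted from $\partial^2=0$ together with $(\partial^0)^2=0$, which itself follows from the Matching axiom since $\partial^0$ vanishes on $B\oplus K$). Combined with the orthogonal decomposition $\id=P_A+P_B+P_K$, this pins down $\partial^0 N h_0$ so as to reproduce the $N P_K M=ip$ contribution and match $\id-ip$. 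The delicate step is tracking signs and projections consistently across the $A$, $B$, $K$ summands, which is where the $-1$ in the dotted-arrow weight earns its keep.
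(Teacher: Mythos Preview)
The paper does not prove this statement; it is quoted verbatim from \cite{CLZ}, whose very title (``Algebraic Morse theory via homological perturbation lemma'') shows that your perturbation-series approach is precisely the intended one. Your identification of the zigzag sums with the closed forms $p=P_KM$, $i=N\iota_K$, $h=Nh_0=h_0M$, $\partial^{\mathcal M}=P_K\partial^1N\iota_K$ is correct, and your arguments for the side conditions $h^2=0$, $ph=0$, $hi=0$, $pi=\id$ are sound. The deduction of the chain-map identities from these side conditions together with the homotopy relation is also valid: for instance, applying $p$ on the left to the homotopy relation and using $ph=0$, $pi=\id$ forces $p\partial h=0$, whence $\partial^{\mathcal M}p=p\partial ip=p\partial$.

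There is, however, a real gap at exactly the step you flag as ``delicate.'' With the dotted-arrow weight taken as the \emph{negative} inverse (so that $\partial^0h_0=-P_B$ and $h_0\partial^0=-P_A$, as you write), your expansion gives $\partial^0Nh_0=\partial^0h_0-P_B\partial^1Nh_0=-P_B-P_B(M-\id)=-P_BM$, and then
\[
\partial h+h\partial=(M-\id)+(N-\id)-NP_A-P_BM,
\]
which one checks equals $ip-\id$, not $\id-ip$. The two-term acyclic complex $R\xrightarrow{\ \id\ }R$ with its unique arrow matched already exhibits this: there $K=0$, $h$ equals $-\id$ in degree~$0$, and $\partial h+h\partial=-\id$, while $\id-ip=\id$. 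Hence your claim that ``the $-1$ in the dotted-arrow weight earns its keep'' so as to reproduce $\id-ip$ cannot be completed. Either the dotted weight should be $+\partial^{-1}$, or $h$ should be replaced by $-h$, for the SDR identity in the paper's stated form to hold; this is almost certainly a transcription slip between \cite{CLZ} and the present paper and is harmless for all later applications (which are insensitive to this global sign), but a proof of the theorem \emph{exactly as written here} has to confront it rather than declare the signs settled.
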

		
		By giving a suitable Morse matching, Bardzell's projective resolution in \cite{Bardzell}, two-sided Anick resolution in \cite{CLZ} and Chouhy-Solotar projective resolution in \cite{CS} can all be regarded as a Morse complex by using a Morse matching (see for example, in \cite[~Section~4]{CLZ}) from the reduced bar resolution of the quiver algebras. By using the transfer theory above, we have the following theorem.
		
		\begin{theorem}\label{Morse-trans}
			Let $R$ be an associative ring, $C_{*}$ be a chain complex of left $R$-modules, $\mathcal{M}$ be a Morse matching of the weighted quiver $G(C_*)$ and $C_*^\mathcal{M}$ be the Morse complex. If $C_*$ has some operations that make it a DG associative algebra (DG associative coalgebra, DG Lie algebra), it will induce $C_*^\mathcal{M}$ to become an $A_\infty$-algebra ($A_\infty$-coalgebra, $L_\infty$-algebra).
		\end{theorem}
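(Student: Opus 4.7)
The plan is to proceed by direct assembly: the statement is essentially a combination of two results already available in the paper, so the work lies in checking that the hypotheses of the transfer theorems are actually produced by algebraic Morse theory.

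First, I would invoke Theorem \ref{morse}, which asserts that the explicit maps $p$, $i$, $h$ built from sums of weights of zigzag paths constitute an SDR datum
$$
\begin{tikzcd}
C_* \arrow[r, "p", shift left=2] \arrow["h"', loop, distance=2em, in=215, out=145] & C_*^{\mathcal{M}} \arrow[l, "i", shift left]
\end{tikzcd}
$$
between the original complex and the Morse complex. Note that this is a genuine SDR datum (so $h^2=0$, $hi=0$, $ph=0$ all hold), not merely a homotopy retract, which is exactly the input required by the transfer formulas. The finiteness condition in the definition of a Morse matching (every zigzag path has finite length) is what makes each of the sums defining $p$, $i$, $h$ well-defined.

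Second, with this SDR datum in hand, I would apply the appropriate transfer theorem to the DG structure on $C_*$. If $C_*$ is a DG associative algebra $(C_*,m_1,m_2)$, the transfer theorem from \cite{MM} (recalled above) yields operations $m_n' = p\,m_n\, i^{\otimes n}$ on $C_*^{\mathcal{M}}$ with the higher $m_n$ defined recursively via $h$, and these satisfy the Stasheff identities SI$(n)$. If $C_*$ is a DG associative coalgebra, Theorem \ref{trans coalg} supplies the dual formulas $\Delta_n' = p^{\otimes n}\,\Delta_n\, i$ and yields the identities SI$(n)'$. If $C_*$ is a DG Lie algebra, the transfer theorem from \cite{RR} (Theorem 2.7 above) produces the brackets $l_n = p\,v_n$ satisfying the generalized Jacobi identities JI$(n)$. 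In each of the three cases the resulting higher structure is defined on the underlying graded module of $C_*^{\mathcal{M}}$, and the differential is preserved as $m_1' = \partial^{\mathcal{M}}$ (respectively $\Delta_1' = \partial^{\mathcal{M}}$ or $l_1 = \partial^{\mathcal{M}}$).

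There is no real obstacle of a conceptual nature: the argument is a two-step application of results already stated. The only point that needs attention is well-definedness of the transferred operations, since the formulas defining $m_n$ (resp. $\Delta_n$, $v_n$) are recursive in $h$ and one must confirm that each iterated composition of $h$ with $m_2$ (or $\Delta_2$, or the bracket) terminates on any fixed input. Since $h$ itself is a locally finite sum of zigzag-path weights by the Morse matching condition, and since the transfer formulas are finite sums at each arity $n$, this finiteness propagates and the transferred structure maps are well-defined graded linear maps on $C_*^{\mathcal{M}}$. Hence the Stasheff or Jacobi identities follow directly from the cited transfer theorems, completing the proof.
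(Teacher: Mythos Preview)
Your proposal is correct and matches the paper's approach exactly: the paper does not provide a separate proof of this theorem but simply states it as the immediate combination of Theorem~\ref{morse} (which supplies the SDR datum) with the transfer theorems recalled in Section~2. Your only addition is the explicit discussion of well-definedness via finiteness of zigzag paths, which is a reasonable elaboration but not something the paper spells out.
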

		
		\subsection{DG structures on the complexes associated to quiver algebras}

		Now we will concentrate on quiver algebras of the form $\mathbbm{k}Q/I$, where $\mathbbm{k}$ is a field, $Q$ is a finite quiver, $I$ is a two-sided ideal in the path algebra $\mathbbm{k}Q$. For each integer $n\geq 0$, we denote by $Q_n$ the set of all paths of length $n$ and by $Q_{\geq n}$ the set of all paths with length at
		least $n$. We shall assume that the ideal $I$ is contained in $\mathbbm{k}Q_{\geq 2}$ so that $\mathbbm{k}Q/I$ is not necessarily finite dimensional. We denote by $s(p)$ the source vertex of a path $p$ and by $t(p)$ its terminus vertex. We will write paths from left to right, for example, $p=\alpha_{1}\alpha_{2}\cdots\alpha_{n}$ is a path with starting arrow $\alpha_{1}$ and ending arrow $\alpha_{n}$.  
		The length of a path $p$ will be denoted by $l(p)$. Denote the set of paths in quiver $Q$ also by $Q$ For any $a\in kQ$, we have $a=\sum_{p\in Q,\;\lambda_p\in k}\lambda_p p$ and write $\mathrm{Supp}(a)=\{p\mid \lambda_p\neq 0\}$.
		
		For a quiver algebra $A=\mathbbm{k}Q/I$, denote $E=\mathbbm{k}Q_0$ and $\overline{A}$ is an $E^e$-direct summand of $A$ such that $A=E\oplus \overline{A}$. The reduced bar resolution of the quiver algebra $A$ can be written by the form of the following theorem in the sense of Cibils \cite{C}.
		
		\begin{theorem}
			For the algebra $A=\mathbbm{k}Q/I$, the reduced bar resolution $\mathbb{B}_*$ is a two-sided projective resolution of $A$ with $\mathbb{B}_0=A\otimes_EA$, $\mathbb{B}_n=A\otimes_E(\overline{A})^{\otimes_En}\otimes_EA$ and the differential $d=(d_n)$ is
			
			$$d_n([a_1|\cdots|a_n])=a_1[a_2|\cdots|a_n]+\sum_{i=1}^{n-1}(-1)^i[a_1|\cdots|a_ia_{i+1}|\cdots|a_n]+(-1)^n[a_1|\cdots|a_{n-1}]a_n$$	
			with $[a_1|\cdots|a_n]:=1\otimes a_1\otimes\cdots\otimes a_n\otimes 1$. By convention $\mathbb{B}_{-1}=A$, and $d_0:A\otimes_EA\longrightarrow A$ is given by the multiplication $\mu_A$ in $A$.
		\end{theorem}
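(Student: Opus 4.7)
\emph{Proof proposal.} The plan is to verify three things in turn: each $\mathbb{B}_n$ is projective as an $A^e$-module, the differentials square to zero, and the augmented complex is acyclic.

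For projectivity, I would use that $E=\mathbbm{k}Q_0\cong\mathbbm{k}^{|Q_0|}$ is a commutative semisimple $\mathbbm{k}$-algebra, so every $E^e$-module is projective; in particular $(\overline{A})^{\otimes_E n}$ is $E^e$-projective. Since $A\otimes_E A\cong\bigoplus_{v\in Q_0}Ae_v\otimes_\mathbbm{k} e_v A$ is an $A^e$-direct summand of $A\otimes A$, the functor $A\otimes_E(-)\otimes_E A$ carries projective $E^e$-modules to projective $A^e$-modules, so $\mathbb{B}_n$ is $A^e$-projective. The identity $d_{n-1}\circ d_n=0$ is then a routine bar-complex calculation; the only $E$-relative subtlety is that a product $a_i a_{i+1}$ of elements of $\overline{A}$ still lies in $\overline{A}$ (both factors have positive path-length), so every intermediate term is a well-defined element of the appropriate $\mathbb{B}_k$.

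To prove exactness I would construct a contracting homotopy after restriction to right $A$-modules. Let $\pi:A\to\overline{A}$ denote the $E^e$-linear projection along $E$. Set $s_{-1}(a)=1\otimes a$, and for $n\geq 0$ define
\[
s_n(a_0\otimes a_1\otimes\cdots\otimes a_n\otimes a_{n+1})=1\otimes\pi(a_0)\otimes a_1\otimes\cdots\otimes a_n\otimes a_{n+1}.
\]
Because $\pi$ is $E^e$-linear and the right $A$-action only affects the last tensor slot, each $s_n$ is well defined over $\otimes_E$ and right $A$-linear. The identity $d_{n+1}s_n+s_{n-1}d_n=\id_{\mathbb{B}_n}$ then follows by direct expansion: most pairs of terms cancel, and the leftover ``$\pi(a_0)$ versus $a_0$'' discrepancy is repaired by passing the $E$-part $a_0-\pi(a_0)$ through $\otimes_E$ into the next tensor slot, reassembling the original element.

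The main obstacle is precisely this last step: defining $s_n$ so that it descends to the $E$-relative tensor and then tracking, during the verification of $ds+sd=\id$, how the $E$-components produced by $\pi$ and by the products $a_i a_{i+1}$ migrate between tensor slots. Once this bookkeeping is organised, the projectivity and chain conditions are formal, and the existence of a right $A$-linear contraction of the augmented complex immediately yields exactness as an $A^e$-complex.
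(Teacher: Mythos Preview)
The paper does not actually prove this theorem: it is stated as a background result, attributed to Cibils \cite{C}, and no argument is supplied. So there is no ``paper's own proof'' to compare against.

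That said, your proposal is the standard argument and is essentially correct. The projectivity step is fine: since $E=\mathbbm{k}Q_0$ is semisimple, every $E^e$-module is projective, and $A\otimes_E(-)\otimes_E A$ sends projective $E^e$-modules to projective $A^e$-modules. The chain condition $d^2=0$ is indeed routine; your observation that $a_ia_{i+1}\in\overline{A}$ whenever $a_i,a_{i+1}\in\overline{A}$ (using $I\subseteq\mathbbm{k}Q_{\geq 2}$) is exactly the point needed to see the formulas make sense over $\otimes_E$. Your contracting homotopy $s_n$ is the classical one, and the verification of $ds+sd=\id$ goes through: the key identity is that $\pi(a_0a_1)=a_0a_1$ (since $a_1\in\overline{A}$ forces $a_0a_1\in\overline{A}$), so the residual term $[a_0a_1|\cdots]-[\pi(a_0)a_1|\cdots]=[(a_0-\pi(a_0))a_1|\cdots]$ can be rewritten as $(a_0-\pi(a_0))[a_1|\cdots]$ via the $\otimes_E$-relation, recombining with $\pi(a_0)[a_1|\cdots]$ to give $a_0[a_1|\cdots]$. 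You might state this last computation explicitly rather than leaving it as ``bookkeeping,'' and you should also record the base cases $n=0$ and $n=-1$ separately, but the strategy is sound.
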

		
		Let $\mathbb{P}$ be a projective resolution of $A$ as an $A^e$-module with differential $d_\mathbb{P}$. The total complex $\mathrm{Tot}(\mathbb{P}\otimes_A\mathbb{P})$ is also an $A^e$-projective resolution of $A$ with its $n$-th module $\mathrm{Tot}(\mathbb{P}\otimes_A\mathbb{P})_n$ is given as $\sum_{i+j=n}\mathbb{P}_i\otimes_A\mathbb{P}_j$ and differentials $d_\mathbb{P}\otimes\id+\id\otimes d_\mathbb{P}$ (see for example, \cite[~Page~33]{W}). By comparison theorem, there is a chain map $\Delta_\mathbb{P}:\;\mathbb{P}\rightarrow\mathbb{P}\otimes_A\mathbb{P}$ lifting the canonical isomorphism from $A$ to $A\otimes_A A$. In particular, the diagram
		$$
		\begin{tikzcd}
			{\mathbb{P}} && A \\
			{\mathbb{P}\otimes_A\mathbb{P}} && {A\otimes_AA}
			\arrow[from=1-1, to=1-3]
			\arrow["{\Delta_\mathbb{P}}"', from=1-1, to=2-1]
			\arrow["\cong", from=1-3, to=2-3]
			\arrow[from=2-1, to=2-3]
		\end{tikzcd}$$
		is commutative. The map $\Delta_\mathbb{P}$ is called a diagonal map and it is unique up to chain homotopy. Moreover, we also have $(d_\mathbb{P}\otimes\id+\id\otimes d_\mathbb{P})\Delta_\mathbb{P}=\Delta_\mathbb{P}d_\mathbb{P}$.
		
		Restrict the discussion on the reduced bar resolution $\mathbb{B}_*$, one definition of a diagonal map $\Delta:\;\mathbb{B}_*\rightarrow\mathbb{B}_*\otimes_A\mathbb{B}_*$ is following:
		$$\Delta([a_1|\cdots|a_n])=\sum_{i=0}^{n}[a_1|\cdots|a_i]\otimes[a_{i+1}|\cdots|a_n].$$
		with $[a_0]=[a_{n+1}]=1\otimes 1$. It is obvious that $(\Delta\otimes\id)\Delta=(\id\otimes\Delta)\Delta$. Thus $(\mathbb{B}_*,d_*,\Delta)$ is a DG associative coalgebra. By applying the functor $\mathrm{Hom}_{A^e}(-,A)$, \cite{Ger} has told us there exist two operations $\smile$ and $[-,-]$ on $\mathbb{B}^*=\mathrm{Hom}_{A^e}(\mathbb{B}_*,A)$, such that $(\mathbb{B}^*,d^*,\smile )$ is a DG associative algebra and $(\mathbb{B}^*[1],d^*,[-,-])$ is a DG Lie algebra. Moreover, for $f\in \mathbb{B}^m$ and $g\in\mathbb{B}^n$, we have $f\smile g=\mu_A(f\otimes g)\Delta$ (See for example in \cite{Volkov}).
		
		There are also some DG structure different from above cases we cared about. Consider the bar construction of $A$ which is a complex $\mathbb{B}'_*$ with $\mathbb{B}'_n=E\otimes_A\mathbb{B}_n\otimes_A E\cong\overline{A}^{\otimes n}$ ($n\geq 1$) and the differential
		$$d_n([a_1|\cdots|a_n])=\sum_{i=1}^{n-1}(-1)^{i}[a_1|\cdots|a_ia_{i+1}|\cdots|a_n].$$
		Moreover, the complex $\mathbb{B}'_*$ also admits a diagonal $\Delta'$ given by
		$$\Delta'([a_1|\cdots|a_n])=\sum_{i=1}^{n-1}[a_1|\cdots|a_i]\otimes[a_{i+1}|\cdots|a_n].$$
		Thus $(\mathbb{B}'_*,d_*,\Delta')$ is a DG associative coalgebra. By applying the functor $(-)^\vee=\mathrm{Hom}_\mathbbm{k}(-,\mathbbm{k})$ and \cite[~Section~1.2.2]{LV}, denote by $\mathbb{B}'^*=(\mathbb{B}'_*)^\vee\cong\mathrm{Hom}_A(\mathbb{B}\otimes_A E,E)$ and $D^2$ be the canonical map from $(\mathbb{B}'_*)^\vee\otimes(\mathbb{B}'_*)^\vee$ to $(\mathbb{B}'_*\otimes\mathbb{B}'_*)^\vee$, then $(\mathbb{B}'^*,d,\Delta^\vee\circ D^2)$ is a DG associative algebra.
		
		Now we consider a Morse matching $\mathcal{M}$ of the weighted quiver $G(\mathbb{B}_*)$, it will give us a Morse complex $C^\mathcal{M}_*$ from $\mathbb{B}_*$ by Theorem \ref{morse}. Therefore, by Theorem \ref{Morse-trans}, we will get some higher structure on the complexes associated to quiver algebras.
		
		\begin{proposition}\label{quiver-trans}
			Let $A=\mathbbm{k}Q/I$ be a quiver algebra and $\mathbb{B}_*$ be the complex of reduced bar resolution of $A$. If $\mathcal{M}$ is a Morse matching of the weighted quiver $G(\mathbb{B}_*)$ and $\mathbb{B}^\mathcal{M}_*$ is the Morse complex from $\mathbb{B}_*$, then
			\begin{enumerate}[(1)]
				\item there exists $\{\Delta_i\}_{i\geq 1}$, such that $(\mathbb{B}^\mathcal{M}_*,\{\Delta_i\}_{i\geq 1})$ become an $A_\infty$-coalgebra;
				
				\item there exists $\{m_i\}_{i\geq 1}$, such that $(\mathrm{Hom}_{A^e}(\mathbb{B}^\mathcal{M}_*,A),\{m_i\}_{i\geq 1})$ become an $A_\infty$-algebra;
			
				\item there exists $\{\Delta_i'\}_{i\geq 1}$, such that $(E\otimes_A \mathbb{B}^\mathcal{M}_*\otimes_A E,\{\Delta_i'\}_{i\geq 1})$ become an $A_\infty$-coalgebra;
					
				\item there exists $\{m_i'\}_{i\geq 1}$, such that $(\mathrm{Hom}_A(\mathbb{B}^\mathcal{M}_*\otimes_AE,E),\{m_i'\}_{i\geq 1})$ become an $A_\infty$-algebra;
					
				\item there exists $\{l_i\}_{i\geq 1}$, such that $(\mathrm{Hom}_{A^e}(\mathbb{B}^\mathcal{M}_*,A)[1],\{l_i\}_{i\geq 1})$ become an $L_\infty$-algebra.
					
			\end{enumerate}
		\end{proposition}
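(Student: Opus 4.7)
The plan is to combine the SDR datum produced by algebraic Morse theory (Theorem~\ref{morse}) with the three transfer theorems of Section~2, after transporting that datum along an appropriate additive functor in each case. The DG structures on the five target complexes have been laid out in the paragraphs immediately preceding the statement, so once a compatible SDR is in place each transfer theorem applies directly.

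For item~(1), Theorem~\ref{morse} already produces an SDR datum between the DG coalgebra $(\mathbb{B}_*, d_*, \Delta)$ and the Morse complex $\mathbb{B}^\mathcal{M}_*$, viewed as chain complexes of $\mathbbm{k}$-vector spaces. I would feed this datum directly into Theorem~\ref{trans coalg} to obtain the family $\{\Delta_i\}_{i\geq 1}$, with $\Delta_1 = d^\mathcal{M}$, $\Delta_2 = p^{\otimes 2}\Delta\, i$, and the higher $\Delta_n$ given by the recursive formula recalled there.

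For items~(2)--(5) the first step is to transport the SDR datum along the appropriate functor: apply $\mathrm{Hom}_{A^e}(-, A)$ for (2) and (5), apply $E \otimes_A - \otimes_A E$ for (3), and apply $\mathrm{Hom}_A(- \otimes_A E, E)$ (equivalently dualise the SDR of (3) via $(-)^\vee$) for (4). Since each of these functors is additive and $\mathbbm{k}$-linear, the SDR identities $\mathrm{id} - ip = dh + hd$, $pi = \mathrm{id}$, $h^2 = 0$, $hi = 0$, $ph = 0$ are inherited by the induced datum, with the roles of $i$ and $p$ exchanged in the contravariant cases. With the induced SDR in hand, the $A_\infty$-algebra transfer theorem of \cite{MM} furnishes $\{m_i\}$ and $\{m_i'\}$ in (2) and (4), Theorem~\ref{trans coalg} furnishes $\{\Delta_i'\}$ in (3), and the $L_\infty$-algebra transfer theorem of \cite{RR} furnishes $\{l_i\}$ in (5), taking as input respectively the DG algebras $\mathrm{Hom}_{A^e}(\mathbb{B}_*, A)$ and $\mathrm{Hom}_A(\mathbb{B}_* \otimes_A E, E)$, the DG coalgebra $E \otimes_A \mathbb{B}_* \otimes_A E$, and the DG Lie algebra $\mathrm{Hom}_{A^e}(\mathbb{B}_*, A)[1]$ equipped with the Gerstenhaber bracket.

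The main obstacle will be the bookkeeping required to confirm that the transported datum is still a \emph{strong} deformation retract rather than only a homotopy retract. In the covariant case $E \otimes_A - \otimes_A E$ this is immediate from the $A^e$-linearity of $p$, $i$, $h$ built into Theorem~\ref{morse}. In the contravariant Hom cases one checks that $\mathrm{Hom}_{A^e}(p, A)$ plays the role of the new section, $\mathrm{Hom}_{A^e}(i, A)$ plays the role of the new retraction, and that the shifted homotopy $\mathrm{Hom}_{A^e}(h, A)$ still squares to zero and is annihilated on the appropriate side by the new inclusion and projection; the Koszul signs arising from the degree shift are absorbed into the sign conventions of the quoted transfer theorems. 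Once these compatibilities are verified, items~(2)--(5) follow by direct substitution into the formulas provided by those theorems.
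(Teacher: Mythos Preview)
Your proposal is correct and matches the paper's approach: the paper does not give a separate proof of this proposition but treats it as an immediate consequence of Theorem~\ref{Morse-trans}, which in turn is just the combination of the SDR datum of Theorem~\ref{morse} with the transfer theorems of Section~2 applied to the DG structures described in the paragraphs preceding the proposition. Your added remarks about transporting the SDR through the relevant additive functors (covariant for (3), contravariant for (2), (4), (5)) make explicit a step the paper leaves implicit, but the underlying argument is the same.
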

		
		For example, \cite{BW} discussed the $L_\infty$-structure in (5) of quiver algebras using the SDR dutum given by Chouhy-Solotar projective resolution of quiver algebras; \cite{RR} discussed the $L_\infty$-structure in (5) of quiver algebras using the SDR dutum given by the Bardzell's minimal projective resolution of monomial algebras; \cite{TAMA} discussed the $A_\infty$-structure in (3) and (4) of quiver algebras using the SDR dutum given by the two-sided Anick resolution of monomial algebras.

	\section{Applications to toupie algebras}

	\subsection{Basic definitions}

	Now we restrict our discussion on a special case of quiver algebras. Recall the definition.
	
	\begin{definition}$(\cite[~Definition~1]{ALS})$
		A finite quiver is toupie if it has a unique source and a unique sink and any other vertex is the source of exactly one arrow and the target of exactly one arrow. The source and sink will be denoted by $0$ and $\omega$, respectively.
		
		Given a toupie quiver $Q$ and any admissible ideal $I\subseteq \mathbbm{k}Q$, $A=\mathbbm{k}Q/I$ will be called a toupie algebra. The paths from $0$ to $\omega$ will be called branches. The $j$-th branches will be denoted by $\alpha^{(j)}$ and the $i$-th arrow in $\alpha^{(j)}$ will be denoted by $\alpha^{(j)}_i$.
	\end{definition}

	By the definition above, for a fixed toupie algebra $A=\mathbbm{k}Q/I$, there are four possible kind of branches in $Q$:
	
	\begin{enumerate}[(1)]
		\item $B_1=\{\alpha^{(1)},\cdots,\alpha^{(a)}\}$: arrows from $0$ to $\omega$;
		
		\item $B_2=\{\alpha^{(a+1)},\cdots,\alpha^{(a+l)}\}$: branches of length greater than $1$ and not involved in any relations;
		
		\item $B_3=\{\alpha^{(a+l+1)},\cdots,\alpha^{(a+l+m)}\}$: branches containing monomial relations;
		
		\item $B_4=\{\alpha^{(a+l+m+1)},\cdots,\alpha^{(a+l+m+n)}\}$: branches involved in non monomial relations. For convenience, let $l(\alpha^{(a+l+m+i)})\geq l(\alpha^{(a+l+m+j)})$ for all $i\leq j$.
		
	\end{enumerate}

	Given a finite set of $r$ equations generating the non monomial relations in $I$ and having fixed the order of branches defined in $B_4$, let $C=(a_{ij})\in\mathbbm{k}^{r\times n}$ be the matrix whose rows are coefficients of each of these equations. Replacing the given relations by those obtained from the reduced row echelon form of the matrix gives of course the same algebra. From now on we will always suppose that this matrix is already reduced and its rank is $r$. Every non monomial relation will be of the form:
	$$\rho_i=\alpha^{(a+l+m+k_i)}+\sum_{j>k_i}a_{ij}\alpha^{(a+l+m+j)},\quad 1\leq i\leq r.$$
	We will call $\tip(\rho_i)=\alpha^{(a+l+m+k_i)}$, $\tip^{-1}(\alpha^{(a+l+m+k_i)})=\rho_i$ and $f_{\rho_i}=-\sum_{j>k_i}a_{ij}\alpha^{(a+l+m+j)}$. Moreover, denote the coefficients of the branches by $c(\alpha^{(a+l+m+j)})=a_{ij}$ $(j>k_i)$ and $c(\alpha^{(a+l+m+k_i)})=1$ in $\rho_i$.
	
	Let $\mathcal{G}$ be a minimal set of generators of $I$ containing all $\rho_i$. The set $\mathcal{G}$ will be the disjoint union of $\mathcal{G}_{mon}$, consisting of monomial relations, and $\mathcal{G}_{nomon}=\{\rho_i\;|\;1\leq i\leq r\}$, the set of non monomial relations. Denote by $\tip\mathcal{G}_{nomon}=\{\tip(\rho_i)\;|\;1\leq i\leq r\}$, $\tip\mathcal{G}=\mathcal{G}_{mon}\cup\mathcal{G}_{nomon}$ and $\ntip(I)=Q\backslash Q\tip\mathcal{G}Q$. Thus, actually, $\mathcal{G}$ is a reduced Gr\"{o}bner basis of $I$ in $\mathbbm{k}Q$ (the definition of Gr\"{o}bner basis can be found in \cite{CLZ}).

	\subsection{The minimal resolution of toupie algebras}
	
			Let $A=\qa$ be a toupie algebra, $\mathcal{G}$ be the set defined above, and denote $W:=\mathrm{Tip}(\mathcal{G})$. Denote by $\mathbb{B}_*$ the reduced bar resolution of $A$. Similar as in \cite{CLZ}, we define a new quiver $Q_W=(V,E)$ with respect to $W$, which is called the Ufnarovski\u\i~graph (or just Uf-graph).
	
	\begin{definition} \label{Uf-graph}
		A (left) Uf-graph $Q_W=(V,E)$ with respect to $W$ of the algebra $A$ is given by
		
		$$V:=Q_0\cup Q_1\cup \{u\in Q_{\geq 0}\ |\ \text{$u$ is a proper right factor of some $v\in W$}\}$$
		
		$$E:=\{e\rightarrow x\ |\ e\in Q_0,\;x=ex\in Q_1\}~~\cup$$$$\{u\rightarrow v\ |\ uv\in\langle \mathrm{Tip}(\mathcal{G})\rangle, \text{ but } w\notin\langle \mathrm{Tip}(\mathcal{G})\rangle\;for\;uv=wp,\;l(p)\geq 1\}$$
	\end{definition}
	
	Using Uf-graph $Q_W$ one can define (for each $i\geq -1$) the $i$-chains, which form a subset of generators of $B_i(A)=\bigoplus A^e[w_1|\cdots|w_i]$ for $i\geq 0$, with $w_1,\cdots,w_i\in \mathrm{NonTip}(I)\backslash Q_0$ and $w_1\cdots w_i\in Q_{\geq 0}$.
	
	\begin{definition} \label{i-chains}
		
		\begin{itemize}
			\item The set $W^{(i)}$ of (left) $i$-chains consists of all sequences $[w_1|\cdots|w_{i+1}]$ with each $w_k\in \mathrm{NonTip}(I)\backslash Q_0$, such that
			$$e\rightarrow w_1\rightarrow w_2\rightarrow \cdots\rightarrow w_{i+1}$$
			is a path in $Q_W$. And define $W^{(-1)}:=Q_0$.
			
			\item For for all $ p\in Q_{\geq 0}$, define $$V_{p,i}^{(n)}=\{[w_1|\cdots|w_n]\ |\ p=w_1\cdots w_n,\; [w_1|\cdots|w_{i+1}]\in W^{(i)},\;[w_1|\cdots|w_{i+2}]\notin W^{(i+1)}\}$$
		\end{itemize}
	\end{definition}

	By using the definition above, we can define a partial matching $\mathcal{M}$ to be the set of arrows of the following form in the weighted quiver $G(\mathbb{B}_*)$, where $\mathbb{B}_*$ is the reduced bar resolution of the toupie algebra $A=kQ/I$:
	
	$$[w_1|\cdots|w_{i+1}|w_{i+2}'|w_{i+2}''|w_{i+3}|\cdots|w_n]\stackrel{(-1)^{i+2}}{\longrightarrow}[w_1|\cdots|w_{i+2}|\cdots|w_n]$$
	where $$w=w_1\cdots w_n=w_1\cdots w_{i+2}'w_{i+2}''\cdots w_n,~~w_{i+2}=w_{i+2}'w_{i+2}'',$$ 
	$$[w_1|\cdots|w_{i+1}|w_{i+2}'|w_{i+2}''|w_{i+3}|\cdots|w_n]\in V_{w,i+1}^{(n+1)},~~[w_1|\cdots|w_{i+2}|\cdots|w_n]\in V_{w,i}^{(n)}.$$
	
	\begin{theorem}$(\cite[~Theorem~4.3]{CLZ})$\label{morse matching}
		The partial matching $\mathcal{M}$ is a Morse matching of $G(\mathbb{B}_*)$ with the set of critical vertices in $n$-th component is $W^{(n-1)}$.
	\end{theorem}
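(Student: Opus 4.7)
The plan is to verify the three defining conditions of a Morse matching separately: that $\mathcal{M}$ is a well-defined partial matching, that every zigzag path in $G_\mathcal{M}(\mathbb{B}_*)$ has finite length, and that the critical vertices in degree $n$ are exactly $W^{(n-1)}$. The partial matching and critical-vertex parts are linked and both fall out of a uniform case analysis based on a vertex's ``critical index''; the hard part will be the zigzag finiteness.

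For the partial matching, every arrow of $\mathcal{M}$ carries weight $(-1)^{i+2}\id$, which is invertible, so the substantive point is uniqueness: each vertex lies in at most one arrow of $\mathcal{M}$. The key observation is that the sets $\{V^{(n)}_{w,i}\}_i$ partition the generators $[w_1|\cdots|w_n]$ with fixed product $w$, so each generator $v$ has a uniquely determined index $k$, namely the largest integer with $[w_1|\cdots|w_{k+1}]$ a $k$-chain. A case analysis then establishes the dichotomy: if $n=k+1$, so that $v$ is a full $(n-1)$-chain, then $v$ appears in no arrow of $\mathcal{M}$; if $n\geq k+2$, then $v$ appears in exactly one arrow of $\mathcal{M}$, playing the role of source (merging positions $k+1$ and $k+2$) when $w_{k+1}w_{k+2}$ is tip-free, and the role of target (splitting $w_{k+2}$ at the unique shortest proper prefix $w'_{k+2}$ for which $w_{k+1}\to w'_{k+2}$ is an Uf-graph arrow) when $w_{k+1}w_{k+2}$ contains a tip. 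The remaining Uf-graph conditions demanded by Definition~\ref{Uf-graph} are automatic; for instance, $w_k\to w_{k+1}w_{k+2}$ is forced not to be an arrow because $w_kw_{k+1}$ already ends in a tip of an element of $W$. This dichotomy simultaneously identifies the critical vertices in $\mathbb{B}_n$ as those with $n=k+1$, i.e., the full $(n-1)$-chains, which by definition form $W^{(n-1)}$.

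The Morse condition (finiteness of zigzag paths) is the main technical obstacle. The strategy is to produce a well-founded height function $h$ on the vertices of $G_\mathcal{M}(\mathbb{B}_*)$ that is constant along every arrow of $\mathcal{M}$ (so preserved by dotted arrows) and strictly decreases along every arrow of $G(\mathbb{B}_*)\setminus\mathcal{M}$ (so strictly decreases along every thick arrow); this at once forces every zigzag to terminate. A natural first ingredient is the underlying product $w=w_1\cdots w_n$ itself: inner merges and splits preserve it, while the boundary thick arrows arising from the terms $a_1[w_2|\cdots|w_n]$ and $[w_1|\cdots|w_{n-1}]a_n$ in the bar differential strictly shorten it. Thus every zigzag eventually stabilizes inside the stratum of decompositions of a fixed $w$, and since those decompositions into pieces of $\mathrm{NonTip}(I)\setminus Q_0$ are finite in number, one is reduced to ruling out cycles within this finite set. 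The delicate step is to pin down a tiebreaker---for instance, a lexicographic comparison of the tuple of piece lengths augmented by the critical index $k$---that truly decreases along every non-$\mathcal{M}$ inner merge and whose value is restored by the subsequent dotted arrow only up to a strict drop in the previous coordinate, thereby excluding cycles and completing the Morse-matching verification.
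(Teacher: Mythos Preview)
First, note that the paper does not itself prove this statement: it is quoted as \cite[Theorem~4.3]{CLZ}, so there is no in-paper argument to compare against. Your handling of the matching and critical-vertex conditions is sound and follows the standard line.

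There is, however, a genuine gap in your zigzag-finiteness argument. You assert that inner merges preserve the underlying product $w=w_1\cdots w_n$, and then reduce to ruling out cycles among the finitely many decompositions of a fixed $w$. This is false once non-monomial relations are present---which is exactly the situation for toupie algebras. The inner bar-differential term $[w_1|\cdots|w_iw_{i+1}|\cdots|w_n]$ is computed in $A$, not in $\mathbbm{k}Q$: if the path $w_iw_{i+1}$ contains $\tip(\rho)$ for some $\rho\in\mathcal{G}_{nomon}$, then in $A$ it is rewritten as a $\mathbbm{k}$-linear combination of $\mathrm{NonTip}$ paths lying on \emph{other} branches, and each such term contributes a thick arrow in $G(\mathbb{B}_*)$ whose target has concatenated path different from $w_1\cdots w_n$. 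Concretely, from $[w_1|w_2]$ with $w_1w_2=\tip(\rho_i)$ the merge yields thick arrows to each $[\gamma]$ with $\gamma\in\supp(f_{\rho_i})$, and the ensuing dotted arrow lands at $[\gamma_1|\gamma_{\geq 2}]$, already outside the stratum of decompositions of $w_1w_2$. The repair, as carried out in \cite{CLZ} for arbitrary quiver algebras with a reduced Gr\"obner basis, is to take as the leading coordinate of your height function not the length of $w$ but $w$ itself in the admissible well-order that makes $\mathcal{G}$ a Gr\"obner basis: Gr\"obner reduction then strictly decreases this coordinate, boundary arrows still shorten, and a secondary invariant (such as the critical index) handles the remaining product-preserving inner merges. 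As written, your height function fails precisely on the non-monomial relations that distinguish toupie algebras from the monomial case.
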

	
	By the definition of $n$-chains, we have an one-to-one correspondence between $[w_0|\cdots|w_n]\in W^{(n)}$ and paths $w_0\cdots w_n\in Q$ which have the same property in Definition \ref{i-chains}. Therefore, we also regard $W^{(n)}$ as a subset of $Q$. Thus we can define the minimal $A^e$-projective resolution of $A$, which is also the two-sided Anick resolution of $A$.
	
	\begin{proposition}$(\cite[~Proposition~2.2]{ALS})$
		Given a toupie algebra $A=\qa$, $E=kQ_0$ the minimal $A^e$-projective resolution $\mathbb{P}_*$ of $A$ is defined by
		$$\cdots\longrightarrow \mathbb{P}_n\stackrel{d_n}{\longrightarrow}\cdots\longrightarrow \mathbb{P}_2\stackrel{d_2}{\longrightarrow}\mathbb{P}_1 \stackrel{d_1}{\longrightarrow}\mathbb{P}_0\stackrel{\mu}{\longrightarrow}A\longrightarrow 0,$$
		where $\mathbb{P}_n=A\otimes_E \mathbbm{k}W^{(n-1)}\otimes_E A$, and the differential is given by
		$$\mu(a\otimes b)=ab,\quad a,b\in A,$$
		$$d_1([v])=v\otimes1-1\otimes v,\quad v\in Q_1,$$
		$$d_2([w_0|w_1])=\sum_{p_1p_2p_3\in \supp(\tip^{-1}(w_0w_1))}c(p_1p_2p_3)p_1\otimes p_2\otimes p_3,\quad w_0,p_2\in Q_1,\; w_0w_1\in\tip\mathcal{G},$$
		$$	d_n([w_0|\cdots|w_{n-1}])=\left\{
		\begin{array}{*{2}{ll}}
			\sum w^{(1)}\otimes w^{(2)}\otimes w^{(3)},& \quad \text{if $n$ is even,}\\
			w_{n-1}'[w_{n-2}'|\cdots|w_0']-[w_0|\cdots|w_{n-2}]w_{n-1},&\quad \text{if $n$ is odd.}
		\end{array}
		\right.$$
		where $ w^{(2)}\in W^{(n-2)}$, $w_{n-1}'\cdots w_0'=w_0\cdots w_{n-1}\in Q$ and $	[w_{n-1}'|w_{n-2}'|\cdots|w_0']$ is a right $(n-1)$-chain, while $[w_0|\cdots|w_{n-2}|w_{n-1}]$ is a left $(n-1)$-chain.
	\end{proposition}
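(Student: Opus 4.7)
The plan is to derive this resolution as the Morse complex of the reduced bar resolution $\mathbb{B}_*$ with respect to the Morse matching $\mathcal{M}$ introduced in Theorem \ref{morse matching}. By Theorem \ref{morse}, this Morse complex $\mathbb{B}_*^{\mathcal{M}}$ is a complex of $A^e$-modules homotopy equivalent to $\mathbb{B}_*$, so it is automatically an $A^e$-projective resolution of $A$. Since Theorem \ref{morse matching} identifies the critical vertices in homological degree $n$ with $W^{(n-1)}$, we immediately obtain $\mathbb{P}_n := \mathbb{B}_n^{\mathcal{M}} = A \otimes_E \mathbbm{k}W^{(n-1)} \otimes_E A$ as claimed. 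The identification with the two-sided Anick resolution then comes from comparing with the construction in \cite{CLZ}.

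The substantive content is then to recover the explicit differential formulas, which requires computing $\partial_n^{\mathcal{M}}$ as a sum over zigzag paths in $G_{\mathcal{M}}(\mathbb{B}_*)$. Here I would proceed by cases. For $n=1$, there are no arrows in $\mathcal{M}$ attached to $[v]$ with $v\in Q_1$, so $d_1([v])$ simply inherits the bar differential $v\otimes 1 - 1\otimes v$. For $n=2$, given a critical chain $[w_0|w_1]$ with $w_0w_1\in\tip\mathcal{G}$, a zigzag path replaces a segment of $w_0w_1$ by the corresponding element of $\tip^{-1}(w_0w_1)$ using the invertibility of the matched arrows; the coefficients appearing are exactly the coefficients $c(p_1p_2p_3)$ extracted from the non-monomial relation $\rho_i$ via the reduced row echelon form. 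For general even $n$, the zigzag paths starting from $[w_0|\cdots|w_{n-1}]$ unravel and reshuffle the bar segments until they terminate at a critical vertex of the form $[w^{(1)}|w^{(2)}|w^{(3)}]$ with middle factor in $W^{(n-2)}$, giving the sum expression. For odd $n$, the combinatorics of zigzag paths in $G_{\mathcal{M}}$ collapse drastically: essentially only two families of paths survive, corresponding to stripping $w_{n-1}$ on the right versus reading $w_0\cdots w_{n-1}$ backwards as a right $(n-1)$-chain, yielding the two-term expression $w_{n-1}'[w_{n-2}'|\cdots|w_0'] - [w_0|\cdots|w_{n-2}]w_{n-1}$.

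To verify minimality of $\mathbb{P}_*$, I would then check that each differential lands in $\overline{A}\cdot \mathbb{P}_{n-1}\cdot A + A\cdot \mathbb{P}_{n-1}\cdot \overline{A}$; this is visible from the formulas, since every term of $d_n$ has at least one factor of positive length attached to a generator, either from $w_{n-1}$, $w_{n-1}'$, or from the components of the outer factors in the even case. Uniqueness up to isomorphism of minimal resolutions then justifies calling $\mathbb{P}_*$ \emph{the} minimal $A^e$-projective resolution.

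The main obstacle will be the careful bookkeeping of signs and coefficients in the zigzag path computation, particularly for $n=2$ where non-monomial relations force genuine linear combinations to appear with the $c(\cdot)$ coefficients, and for even $n\geq 4$ where the middle $(n-2)$-chain factor $w^{(2)}$ must be extracted from a potentially long alternating sequence of formal insertions and concatenations. The odd case is easier because the matching pairs up all but two extremal path types, but verifying this requires carefully classifying which zigzag paths are finite and non-cancelling—a classification that ultimately relies on the very rigid combinatorial structure of the Uf-graph of a toupie quiver, namely that each internal vertex has in- and out-degree exactly one, so that the overlaps of tips in $W$ are highly constrained.
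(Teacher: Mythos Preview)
The paper does not give its own proof of this proposition: it is quoted verbatim from \cite[Proposition~2.2]{ALS} and used as a black box. So there is no argument in the paper to compare against directly.

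Your proposal, by contrast, is to \emph{rederive} the resolution as the Morse complex $\mathbb{B}_*^{\mathcal{M}}$ of the reduced bar resolution under the matching of Theorem~\ref{morse matching}, and then read off the differentials by enumerating zigzag paths. This is a genuinely different route from simply citing \cite{ALS}, and in fact it is more in keeping with the paper's own methodology: the paper already asserts (just before the proposition) that the two-sided Anick resolution arises this way via \cite{CLZ}, but never carries out the explicit zigzag computation that would recover the stated formulas for $d_n$. Your approach would make that identification concrete rather than leaving it implicit. What the citation buys is brevity; what your approach buys is a self-contained derivation that also explains \emph{why} the differentials have the particular shape they do (e.g.\ why only two terms survive in odd degree) in terms of the combinatorics of the Uf-graph.

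That said, your sketch is still only a sketch, and a few points would need tightening before it stands as a proof. For $n=2$ your description of the zigzag mechanism is slightly off: the non-monomial relation does not enter through the dotted (inverted) arrows of $\mathcal{M}$, since all matched arrows have weight $\pm 1$; rather, the coefficients $c(p_1p_2p_3)$ appear because the bar differential itself produces $w_0w_1 \in A$, which then rewrites via the Gr\"obner reduction to $f_{\rho_i}$ before being split again by the matching. For even $n\geq 4$ you assert that zigzag paths terminate at vertices with middle factor in $W^{(n-2)}$, but you would need to argue that no non-monomial relation can occur inside a chain of length $\geq 3$ in a toupie algebra (this is where the source/sink structure of $Q$ is essential: any tip of a non-monomial relation runs from $0$ to $\omega$ and so cannot overlap nontrivially with another tip). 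Once that is in hand, the even and odd cases reduce to the monomial situation already handled in \cite{TAMA} and \cite{CLZ}, and the bookkeeping you flag as the main obstacle becomes routine.
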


	Now we applying the functor $E\otimes_A-$ to the resolution above, we get the minimal right projective resolution of $E$. Moreover, if we applying the functor $E\otimes_A-\otimes_A E$ to the resolution above, since the differential are all become zero, we will get the complex $\mathbb{T}_*:=\tor^A_*(E,E)$ with $\mathbb{T}_n=E\otimes_A \mathbb{P}_n\otimes_A E\cong \mathbbm{k}W^{(n-1)}$.
	
	Now we begin to consider the SDR datum discussed in Theorem \ref{morse}. Since the minimal $A^e$-projective resolution of toupie algebra $A$ is the complex reduced by the Morse matching $\mathcal{M}$ from the reduced bar resolution $\mathbb{B}_*$ of $A$, we have
	$$
	\begin{tikzcd}
		\mathbb{B}'_* \arrow[r, "p", shift left=2] \arrow["h"', loop, distance=2em, in=215, out=145] & \mathbb{T}_* \arrow[l, "i", shift left]
	\end{tikzcd}$$		
	with $\mathbb{B}'_*=E\otimes_A\mathbb{B}_*\otimes_A E$. Now we give a specific description about the chain maps above.
	
	We call a bar term $[w_0|\cdots|w_r]$ attached if for $i=\{1,\cdots,r\}$ , we have $w_{i-1}w_i\in I$. That means if $r>1$, we have $w_{i-1}w_i=0$ in $A$.
	Suppose $\gamma=[w_0|\cdots|w_r]$ is attached but not a chain. Then there is a largest $i_1$ such that $w_{i_1}=w_{i_1}'w_{i_1}''$ and such that $\eta^1=[w_0|\cdots|w_{i_1}']$ is a chain. It may happen that $i_1=0$, in which case $w_0'$ is simply the first starting arrow in $w_0$. We define:
	$$\gamma^1=(-1)^{i_1}[\eta^1|w_{i_1}''|w_{i_1+1}|\cdots|w_r],\quad \Gamma^1=[\eta^1|w_{i_1}''w_{i_1+1}|\cdots|w_r].$$
	If $\Gamma^1$ is a chain or zero, stop. Else, there is some largest $i_2>i_1$ such that, keeping in with the notation above, $\eta^2=[w_0|\cdots|w_{i_1}'|\cdots|w_{i_2}']$ is a chain. In this case, set
	$$\gamma^2=(-1)^{i_2}[\eta^2|w_{i_2}''|w_{i_2+1}|\cdots|w_r],\quad \Gamma^2=[\eta^2|w_{i_1}''w_{i_2+1}|\cdots|w_r].$$	
	Continuing in this way, we obtain terms $\gamma=\Gamma^0,\cdots,\Gamma^n$ and $\gamma^1,\cdots,\gamma^n$, where $\Gamma^n$ is either zero or chain.
	
	\begin{lemma}\label{sdr of tor}
		With the notation above, for an attached bar term $\gamma=[w_0|\cdots|w_r]$, we have 
		$$h(\gamma)=\sum_{i=1}^{n}\gamma^i,\quad p(\gamma)=\Gamma^n,$$
		and for a chain $w=[w_0|\cdots|w_r]$, we have
		$$i(w)=\left\{
		\begin{array}{*{2}{ll}}
		 w,& \quad r\neq 1,\\
			\sum_{p_1p_2\in \supp(\tip^{-1}(w_0w_1))}c(p_1p_2)[p_1|p_2],&\quad r=1,\;p_1\in Q_1.
		\end{array}
		\right.$$
	\end{lemma}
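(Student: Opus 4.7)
My plan is to compute the three maps $p$, $i$, $h$ directly from the definitions in Theorem \ref{morse} by enumerating zigzag paths in $G_{\mathcal{M}}(\mathbb{B}_*)$ with respect to the Morse matching $\mathcal{M}$ of Theorem \ref{morse matching}. Since $p$, $i$, $h$ are each a sum of weights of zigzag paths, all three formulas will follow once the paths starting at $\gamma$ (respectively at $w$) are enumerated and their signed contributions are computed in the reduced bar complex $\mathbb{B}'_*$.

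First I would analyse the outgoing dotted arrows from any bar term. A non-chain term $[u_1|\cdots|u_N]$ admits a unique outgoing matching arrow, which corresponds to the unique factorization $u_{j+1} = u_{j+1}'u_{j+1}''$ that makes $[u_1|\cdots|u_{j+1}']$ a maximal chain prefix; this is precisely the ``largest $i_k$'' prescription in the lemma statement. Its weight is the negation-and-inversion of the matching weight $(-1)^{i_k+2}$, contributing the sign $(-1)^{i_k}$ appearing in $\gamma^k$. From $\gamma$ this first dotted arrow gives $\gamma^1$, and iterating through alternating thick (merge from the bar differential) and dotted (split from the matching) arrows recovers the inductive construction $\Gamma^0 = \gamma, \gamma^1, \Gamma^1, \gamma^2, \Gamma^2, \ldots$. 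At each step only one merge term of $\gamma^k$ survives: merges strictly inside $\eta^k$ are the inverses of the preceding dotted arrow and are therefore forbidden in a zigzag path, while merges past position $i_k+1$ keep $\eta^k$ as a chain prefix and are absorbed into the next iteration; only the merge $w_{i_k}''\cdot w_{i_k+1}$ actually gives a new zigzag step, producing $\Gamma^k$. The iteration terminates when $\Gamma^n$ is either a chain (a critical vertex, hence a path terminus contributing to $p$) or zero; summing the intermediate states yields $h(\gamma) = \sum_{k=1}^n \gamma^k$, while the unique critical endpoint yields $p(\gamma) = \Gamma^n$.

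For $i(w)$ with $w = [w_0|\cdots|w_r]$ a chain, zigzag paths must begin with a thick arrow since critical vertices admit no dotted arrows. When $r \neq 1$, every merge in $d_{r+1}(w)$ produces a product $w_jw_{j+1}$ lying in $\langle\tip\mathcal{G}\rangle$; in $\mathbb{B}'_*$ such merges either vanish outright (in the monomial case) or their zigzag continuations pair off telescopically against subsequent splits, leaving only the length-zero path and hence $i(w) = w$. When $r = 1$, the single merge term $-[w_0w_1]$ survives in $\mathbb{B}'_*$ because $w_0w_1 = f_{\rho}$ with $\rho = \tip^{-1}(w_0w_1)$; the zigzag then unwinds each monomial $p_1p_2 \in \supp(\rho)$ by splitting at the position dictated by the matching, producing the sum $\sum c(p_1p_2)[p_1|p_2]$, with the tip contributing the term $c(w_0w_1) = 1$.

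I expect the main obstacle to be the bookkeeping in the middle paragraph: verifying that exactly one merge survives at each iteration requires using the special structure of toupie quivers (unique source and sink, branches with prescribed relation structure) to rule out spurious new chains arising from internal merges inside $\eta^k$. The sign compatibility between the alternating dotted and thick arrows, together with the matching weights $(-1)^{i_k+2}$, must also be tracked carefully through each step to recover the precise sign $(-1)^{i_k}$ in $\gamma^k$.
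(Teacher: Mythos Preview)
Your approach—enumerating zigzag paths in $G_{\mathcal{M}}(\mathbb{B}'_*)$ directly—is exactly what the paper does. The paper cites \cite{TAMA} for the purely monomial situation and then observes that in a toupie algebra every non-monomial tip is an entire branch from $0$ to $\omega$; consequently a non-monomial tip can only occur in an attached term or chain with $r=1$, and the remaining zigzag is the single thick--dotted step you describe.

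Where your justification needs correction is the middle paragraph. It is not true that all merges strictly inside $\eta^k$ are ``inverses of the preceding dotted arrow'': only the merge at position $i_k$ (re-joining $w_{i_k}'$ with $w_{i_k}''$) is the matching arrow and hence absent from $G_{\mathcal{M}}$; the other internal merges are ordinary thick arrows. Nor are merges past position $i_k+1$ ``absorbed into the next iteration''—zigzag paths are enumerated independently, so a nonzero such merge would spawn a genuinely new path that your argument does not account for. The actual reason all these extra merges disappear is simpler and is the toupie fact you allude to but do not pin down: once $r\ge 2$, every adjacent product $w_{j-1}w_j$ (and every consecutive product inside the chain prefix $\eta^k$) lies in $\langle\mathcal{G}_{mon}\rangle$ and hence vanishes in $\bar{A}$, because a non-monomial tip, being a full branch, cannot sit inside a longer attached word. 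This same observation gives $i(w)=w$ for chains with $r\neq 1$ immediately—every thick arrow out of $w$ is already zero in $\mathbb{B}'_*$, so no telescoping argument is needed.
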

	
	\begin{proof}
		If $\gamma=[w_0|\cdots|w_r]$ is attached with $w_{i-1}w_i$ divided by monomial relations for all $1\leq i\leq r$, the result of $h(\gamma)$ and $p(\gamma)$ is same as the proof in \cite[~Lemma~3.2]{TAMA}. If there exists a non monomial relation in $w_0\cdots w_r$, then $r=1$ and $w_0w_1$ is a path from the vertex $0$ to $\omega$. If $w_0\in Q_1$, then $\gamma$ is a $1$-chain, thus $h(\gamma)=0$ and $p(\gamma)=\gamma$. If $w_0\notin Q_1$, let $w_0=w_0'w_0''$ with $w_0'\in Q_1$, same as the algorithm discussed above the lemma, there is an unique zigzag path from $[w_0|w_1]$ to $[w_0'|w_0''w_1]$. Thus $h(\gamma)=\sum_{i=1}^{n}\gamma^i,  p(\gamma)=\Gamma^n$ for all attached bar term $\gamma$.
		
		Now let $w=[w_0|\cdots|w_r]$ is a chain, if $w$ only contains monomial relations, there is no zigzag paths from $w$ to other bar term which has the form $[w_0'|\cdots|w_r']\in \overline{A}^{\otimes (r+1)}$.
		If $w$ contains non monomial relations, we have $w=[w_0|w_1]$ with $w_0\in Q_1$. For all paths $p_1p_2\in supp(\tip^{-1}(w_0w_1))$ with $p_1\in Q_1$, we have
\[\begin{tikzcd}
	\lbrack w_0|w_1\rbrack && \lbrack p_0|p_1\rbrack \\
	& \lbrack p_1p_2\rbrack
	\arrow["c(p_1p_2)", from=1-1, to=2-2]
	\arrow["1", dashed, from=2-2, to=1-3]
\end{tikzcd}\]
		and by Theorem \ref{morse}, we get the description about $i(w)$.
	\end{proof}
	
	\subsection{$A_\infty$-structure on $\mathbb{T}_*$}
	
	With the notation in above section, we have that $(\mathbb{B}_*',d,\Delta')$ is a DG associative coalgebra with
	$$\Delta'([a_1|\cdots|a_n])=\sum_{i=1}^{n-1}[a_1|\cdots|a_i]\otimes[a_{i+1}|\cdots|a_n].$$
	If $w=[w_0|\cdots|w_n]$ is a $n$-chain, we have $\Delta'(w)=\sum w^{(1)}\otimes w^{(2)}$ where $w^{(1)}$ is also a chain and $h(w)=0$ by Lemma \ref{sdr of tor}. By using Theorem \ref{trans coalg}, we get $\{\Delta_i\}$ on $\mathbb{T}_*$ makes it to be an $A_\infty$ coalgebra. 
	
	\begin{theorem}\label{co-A}
		Let $A=\qa$ be a toupie algebra, $n\geq 3$ and $\gamma$ is a $r$-chain in $\mathbb{T}_*$. When $\gamma$ does not contain non monomial relations,
		$$\Delta_n(\gamma)=\sum(-1)^N\gamma_1\otimes \cdots\otimes\gamma_n,$$
		with $N=r_1+\sum_{i=1}^{n-1}(n-i)r_i$ while $(\gamma_1,\cdots,\gamma_n)$ is a decomposition of $\gamma$ and $\gamma_i$ is a $r_i$-chain for all $1\leq i\leq n$. When $\gamma=[w_0|w_1]$ with $w_0w_1\in \tip\mathcal{G}_{nomon}$,
		$$\Delta_n(\gamma)=\sum_{a_1\cdots a_n\in \supp(\tip^{-1}(w_0w_1))}c(a_1\cdots a_n)a_1\otimes\cdots\otimes a_n$$
		with $a_1,\cdots,a_n\in Q_1$.
	\end{theorem}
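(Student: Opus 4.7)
The plan is to invoke the transfer formula from Theorem \ref{trans coalg} recursively, using the explicit SDR datum $(p,i,h)$ from Lemma \ref{sdr of tor}. Write $\widetilde{\Delta}_n$ for the auxiliary maps on $\mathbb{B}'_*$ defined by the recursion
$$\widetilde{\Delta}_n=\sum_{s+t=n,\,s,t\geq 1}(-1)^{s(t+1)}(\widetilde{\Delta}_sh\otimes\widetilde{\Delta}_th)\,\Delta'$$
with the convention $\widetilde{\Delta}_1 h=\mathrm{id}$, so that the higher operations on $\mathbb{T}_*$ are $\Delta_n=p^{\otimes n}\,\widetilde{\Delta}_n\,i$. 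The argument then splits according to the two possibilities for $i(\gamma)$ given by Lemma \ref{sdr of tor}, and within each case proceeds by induction on $n$.

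For the monomial case, $i(\gamma)=\gamma$ and we must unpack $\widetilde{\Delta}_n(\gamma)$. Here $\Delta'(\gamma)=\sum_{k=0}^{r-1}[w_0|\cdots|w_k]\otimes[w_{k+1}|\cdots|w_r]$ splits $\gamma$ into a sub-chain on the left and an attached bar term on the right; the right piece fails to be a chain exactly when $w_{k+1}\notin Q_1$, and in that situation Lemma \ref{sdr of tor} rewrites it under $h$ as a sum of bar terms obtained by successively separating out the first arrow of each initial piece, introducing one extra cut at a time. Unwinding the recursion for $\widetilde{\Delta}_n$ then becomes a combinatorial bookkeeping exercise: each step either cuts $\gamma$ at a bar delimiter (via $\Delta'$) or sharpens the cut via $h$, and an induction on $n$ shows that the resulting sum runs over all ordered cuttings of the underlying path $w_0\cdots w_r$ into $n$ consecutive sub-paths that are themselves chains. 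Applying $p^{\otimes n}$ then keeps precisely those decompositions whose every tensor factor is already a chain, giving the first formula.

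For the non-monomial case, $i(\gamma)=\sum c(p_1p_2)[p_1|p_2]$ with $p_1\in Q_1$. Each summand is an attached two-piece bar term whose leading entry is already an arrow, and applying the same iterative scheme the only surviving decompositions are those of the form $[a_1|\cdots|a_n]$ with $a_i\in Q_1$, $a_1=p_1$ and $p_2=a_2\cdots a_n$; that is, full atomisations of the path $p_1p_2$ into $n$ consecutive arrows. Summing over all $p_1p_2\in \supp(\tip^{-1}(w_0w_1))$ of length $n$, weighted by $c(a_1\cdots a_n)=c(p_1p_2)$, yields the second formula.

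The sign $(-1)^N$ with $N=r_1+\sum_{i=1}^{n-1}(n-i)r_i$ emerges from a careful accumulation of the Koszul factor $(-1)^{s(t+1)}$ in the recursion together with the internal signs $(-1)^{i_j}$ introduced by the $h$-splittings of Lemma \ref{sdr of tor}. A straightforward induction on $n$, using that the last tensor factor never gets split again once it has been isolated (so $r_n$ does not contribute), reduces the sign on a fixed decomposition to the stated closed form; in Case 2 every $r_i=0$ so $N=0$ and all signs disappear. The main obstacle will be precisely this sign bookkeeping, since the Koszul signs from the transfer formula and the explicit signs produced by the $h$-map must be tracked consistently through the induction.
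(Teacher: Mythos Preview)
Your proposal is correct and follows essentially the same route as the paper: apply the transfer recursion of Theorem~\ref{trans coalg} through the explicit SDR of Lemma~\ref{sdr of tor}, observe that $h$ vanishes on chains so that only the $s=1$ branch of the recursion survives at each stage (this is exactly the ``right comb'' reduction), and then read off the decompositions into sub-chains; the non-monomial case is handled identically in both.

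The only noteworthy difference is in presentation and in the sign bookkeeping. The paper does not redo the combinatorial analysis you sketch but instead cites \cite[Theorems~3.6 and 4.1]{TAMA} outright for the statement that $\Delta_n$ reduces to the right comb and picks out precisely the chain decompositions. For the sign, rather than your proposed induction on $n$, the paper uses the closed-form sign $(-1)^{(n+1)n/2-1}$ carried by the right comb itself, then adds the Koszul corrections coming from $|\gamma_i|=r_i+1$ and simplifies mod~$2$ directly to $N=r_1+\sum_{i=1}^{n-1}(n-i)r_i$. Your inductive sign argument would work too, but the paper's computation is shorter and avoids tracking the $h$-signs through the recursion.
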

	
	\begin{proof}
		When $\gamma$ is a $r$-chain in $\mathbb{T}_*$ and does not contain non monomial relations, same as the proof in \cite[~Theorem~3.6]{TAMA} and \cite[~Theorem~4.1]{TAMA}, we have the operation $\Delta_n$ is just a right comb and non-trivial when $(\gamma_1,\cdots,\gamma_n)$ is a decomposition of $\gamma$ where all $\gamma_i$ are chains. We just give a description about $N$ since we give a different differential in $\mathbb{B}_*'$. Actually, the sign of a right comb is $(-1)^{(n+1)n/2-1}$. Since $|\gamma_i|=r_i+1$, by Koszul sign rule, we have the following equation (mod $2$):
		$$
		\begin{array}{*{3}{lll}}
			N & =& (n+1)n/2-1+(2-(n-1)+1)(r_1+1)+r_1+\cdots+(2-2+1)(r_{n-2}+1)+r_{n-2}+r_{n-1}\\
			&=&(n+1)n/2-1+\sum_{i=1}^{n-1}(n-i)r_i+r_1+1+\sum_{i=1}^{n-1}(n-i)\\
			&=&r_1+\sum_{i=1}^{n-1}(n-i)r_i.
		\end{array}
		$$
		
		Now we consider  $\gamma=[w_0|w_1]$ with $w_0w_1\in \tip\mathcal{G}_{nomon}$, by Lemma \ref{sdr of tor}, we have
		$$i(\gamma)=\sum_{a_1\cdots a_m\in supp(\tip^{-1}(w_0w_1))}c(a_1\cdots a_m)[a_1|a_2\cdots a_m]$$
		with $a_1,\cdots,a_m\in Q_1$, and
		$$\Delta'i(\gamma)=\sum_{a_1\cdots a_m\in supp(\tip^{-1}(w_0w_1))}c(a_1\cdots a_m)[a_1]\otimes [a_2\cdots a_m].$$
		Actually, there are no element in $\mathcal{G}$ can divide $a_2\cdots a_m$, 
		$h([a_2\cdots a_m])=[a_2|a_3\cdots a_m].$
		By induction, the operation $\Delta_n$ is also the right comb. Since $[a_i]$ are all $0$-chains, the sign $(-1)^N=1$. Therefore, we get the result.
	\end{proof}
	
	Now it is time to consider the Yoneda algebra $\mathbb{E}^*=\ext_A^*(E,E)$ of $A$. Since $Q$ is a finite quiver, $\mathbb{T}_*$ is finite dimensional as a vector space. Thus we have an isomorphism between $\mathbb{E}^*$ and $(\mathbb{T}_*)^{\vee}$, which we called $D^*$ (pay attention to the Koszul signs):
	$$D^n(f_1\otimes\cdots\otimes f_n)(c_1\otimes\cdots\otimes c_n)=(-1)^{N'}f_1(c_1)\cdots f_n(c_n)$$
	where $N'=\sum_{i=2}^{n}(|c_1|+\cdots+|c_{i-1}|)|f_i|$, $f_1\otimes\cdots\otimes f_n\in (\mathbb{E}^*)^{\otimes n}$ and $c_1\otimes\cdots\otimes c_n\in (\mathbb{T}_*)^{\otimes n}$.
	
	Define $m_n:\;(\mathbb{E}^*)^{\otimes n}\rightarrow \mathbb{E}^*$ by
	$$m_n(f_1\otimes\cdots\otimes f_n)=(-1)^{n(|f_1|+\cdots+|f_n|)}D^n(f_1\otimes\cdots\otimes f_n)\Delta_n.$$
	If we call an $A_\infty$-algebra structure on $\mathbb{E}^*$ is canonical if it is quasi-isomorphism to the DG associative algebra $(\mathbb{B}'^*,d,\Delta^\vee\circ D^2)$. We have the following result directly.
	
	\begin{corollary}\label{Toupie-Ainf}
		There is a canonical $A_\infty$-algebra structure on $\mathbb{E}^*$ given as follows. 
		\begin{itemize}
			\item If $\gamma_i$ is $r_i$-chain in $\mathbb{T}^{r_i+1}$, $1\leq i\leq n$, $\gamma=\gamma_1\cdots\gamma_n$ is a $r=r_1+\cdots+r_n+1$-chain which does not contain non monomial relations, then 
			$$m_n(\gamma_1^\vee\otimes\cdots\otimes\gamma_n^\vee)=(-1)^M\gamma^\vee,$$
			with $M=r_1+\sum_{i=1}^{n}(n+i+1)r_i+\sum_{i<j}r_ir_j+n(n+1)/2$;
			
			\item If $a_1,\cdots,a_n$ are arrows, such that $a_1\cdots a_n$ is a branches involved in a non monomial relation, then
			$$m_n(a_1^\vee\otimes\cdots\otimes a_n^\vee)=\sum (-1)^{n(n+1)/2} c(a_1\cdots a_n)[w_0|w_1]^\vee$$
			when there exist a $\rho_i\in\mathcal{G}_{nomon}$, with $\tip(\rho_i)=w_0w_1$, $w_0\in Q_1$ and $c(a_1\cdots a_n)a_1\cdots a_n$ is a summand of $\rho_i$;
			
			\item Otherwise, the higher product is zero.
		\end{itemize}
	\end{corollary}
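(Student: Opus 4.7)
The plan is to compute $m_n$ directly from its definition $m_n(f_1\otimes\cdots\otimes f_n)=(-1)^{n(|f_1|+\cdots+|f_n|)}D^n(f_1\otimes\cdots\otimes f_n)\Delta_n$ by substituting the explicit $A_\infty$-coalgebra structure $\Delta_n$ on $\mathbb{T}_*$ obtained in Theorem \ref{co-A}. By linearity it suffices to evaluate on tensors of dual basis elements $c_i^\vee$ where each $c_i$ is a chain. The toupie structure ensures the two cases in Theorem \ref{co-A} are exhaustive: any non-monomial tip is a full branch terminating at the sink $\omega$, so no chain with more than two bar entries can involve a non-monomial factor. The canonicity claim is automatic from the transfer theory: Theorem \ref{trans coalg} applied to the SDR datum of Lemma \ref{sdr of tor} provides a quasi-isomorphism of $A_\infty$-coalgebras between $(\mathbb{T}_*,\{\Delta_n\})$ and the DG coalgebra $(\mathbb{B}'_*,d,\Delta')$, and linear dualization transports this to a quasi-isomorphism of $A_\infty$-algebras between $(\mathbb{E}^*,\{m_n\})$ and $(\mathbb{B}'^*,d,\Delta^\vee\circ D^2)$.

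For the first bullet, I would fix chains $\gamma_1,\ldots,\gamma_n$ as in the statement and evaluate the functional $m_n(\gamma_1^\vee\otimes\cdots\otimes\gamma_n^\vee)$ against an arbitrary chain $\gamma'$. By the first case of Theorem \ref{co-A}, $\Delta_n(\gamma')$ is a signed sum indexed by decompositions of $\gamma'$ into $n$ chains, and the dual pairing via $D^n$ singles out the unique summand whose decomposition is $(\gamma_1,\ldots,\gamma_n)$; this forces $\gamma'=\gamma_1\cdots\gamma_n=\gamma$. Hence $m_n(\gamma_1^\vee\otimes\cdots\otimes\gamma_n^\vee)=\pm\gamma^\vee$, with the sign assembled from three sources: the prefactor $(-1)^{n(r_1+\cdots+r_n+n)}$ in the definition of $m_n$, the Koszul sign $(-1)^{N'}$ from $D^n$ with $N'=\sum_{i<j}(r_i+1)(r_j+1)$, and the internal sign $(-1)^{N}$ in $\Delta_n$ with $N=r_1+\sum_{i=1}^{n-1}(n-i)r_i$. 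A routine mod-$2$ calculation, using the identities $\sum_{i<j}(r_i+r_j)=(n-1)\sum_ir_i$ and $\binom{n}{2}+n\equiv n(n+1)/2\pmod 2$, collapses the total to the asserted exponent $M$.

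For the second bullet, I would substitute the non-monomial formula $\Delta_n([w_0|w_1])=\sum c(a_1\cdots a_n)a_1\otimes\cdots\otimes a_n$ from Theorem \ref{co-A} and pair it with $a_1^\vee\otimes\cdots\otimes a_n^\vee$ via $D^n$. Since every $a_i$ is an arrow of cohomological degree $1$, the Koszul sign reduces to $N'=\binom{n}{2}$ and the prefactor contributes $(-1)^{n\cdot n}$, while the internal sign $(-1)^{N}$ of the corresponding case of Theorem \ref{co-A} is trivial. Summing, $n+\binom{n}{2}\equiv n(n+1)/2\pmod 2$, yielding the claimed sign, and the coefficient is read off directly as $c(a_1\cdots a_n)$; the output is $[w_0|w_1]^\vee$ precisely when there exists a $\rho_i\in\mathcal{G}_{nomon}$ with $\tip(\rho_i)=w_0w_1$ and $c(a_1\cdots a_n)a_1\cdots a_n$ a summand of $\rho_i$.

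Finally, the \emph{Otherwise} clause is immediate from the exhaustive case split of Theorem \ref{co-A}: if $(f_1,\ldots,f_n)$ fits neither pattern, then for every chain $\gamma'$ no summand of $\Delta_n(\gamma')$ has $f_1\otimes\cdots\otimes f_n$-component under $D^n$, so the functional $m_n(f_1\otimes\cdots\otimes f_n)$ vanishes identically. The main obstacle throughout is the sign bookkeeping in the first bullet, where the three separate sign contributions must be reconciled with the compact closed form $M$; once this mod-$2$ arithmetic is carried out, the rest of the statement is essentially formal.
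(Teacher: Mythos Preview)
Your proposal is correct and follows essentially the same route as the paper's proof: both dualize the formula for $\Delta_n$ from Theorem~\ref{co-A} via the defining equation $m_n=(-1)^{n\sum|f_i|}D^n(\cdots)\Delta_n$ and assemble the sign $M$ as the sum $N+N'+n\sum|\gamma_i|$ modulo~$2$. The paper's own proof is terser---it only writes out the mod-$2$ computation for the first bullet and leaves the remaining bullets implicit---whereas you spell out the pairing argument, the second bullet's sign, the ``otherwise'' clause, and the canonicity; but the underlying strategy is identical.
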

	
	\begin{proof}
		We just give an verification about $M$. By Theorem \ref{co-A} and the discussion above, we have the following equation (mod $2$):
		$$
		\begin{array}{*{3}{lll}}
			M & =& N+N'+n(|\gamma_1|+\cdots+|\gamma_n|)\\
			&=&r_1+\sum_{i=1}^{n-1}(n-i)r_i+\sum_{i=2}^{n}(|\gamma_1|+\cdots+|\gamma_{i-1}|)|\gamma_i|+n(|\gamma_1|+\cdots+|\gamma_n|)\\
			&=&r_1+\sum_{i=1}^{n}(n+i+1)r_i+\sum_{i<j}r_ir_j+n(n+1)/2.
		\end{array}
		$$
		Thus we get the result.
	\end{proof}
	Actually, since the differential on $\mathbb{E}^*$ is trivial, the product $m_2$ is associative by the definition of $A_\infty$-algebra. Thus $A^!:=(\mathbb{E}^*,m_2)$ is just the Yoneda algebra $\ext_A(A/rad(A),A/rad(A))$ of the toupie algebra $A$, which is also called the homological dual of $A$.

	\begin{example}\label{ex}
		Consider the toupie algebra $A=\qa$ with the quiver $Q$ is given by:
		$$
		\begin{tikzcd}
			&                                 & \bullet \arrow[ld, "\alpha_1"'] \arrow[dd, "\beta_1"'] \arrow[rdd, "\gamma_1"] &                                 &  &             &                                      & \bullet                                                                                                                                                                                               &                                      \\
			Q: & \bullet \arrow[dd, "\alpha_2"'] &                                                                                &                                 &  & \mathbb{E}: & \bullet \arrow[ru, "\alpha_1^\vee"'] &                                                                                                                                                                                                       &                                      \\
			&                                 & \bullet \arrow[dd, "\beta_2"']                                                 & \bullet \arrow[ldd, "\gamma_2"] &  &             &                                      & \bullet \arrow[uu, "\beta_1^\vee"]                                                                                                                                                                    & \bullet \arrow[luu, "\gamma_1^\vee"] \\
			& \bullet \arrow[rd, "\alpha_3"'] &                                                                                &                                 &  &             & \bullet \arrow[uu, "\alpha_2^\vee"'] &                                                                                                                                                                                                       &                                      \\
			&                                 & \bullet                                                                        &                                 &  &             &                                      & \bullet \arrow[lu, "\alpha_3^\vee"'] \arrow[uu, "\beta_2^\vee"] \arrow[ruu, "\gamma_2^\vee"] \arrow[uuuu, "v^\vee"', bend right=71, shift right=2] \arrow[uuuu, "u^\vee", bend left=71, shift left=4] &                                     
		\end{tikzcd}$$
	and the ideal $I=\langle \alpha_{1}\alpha_{2}\alpha_{3}-\gamma_1\gamma_2,\;\beta_1\beta_2-\gamma_1\gamma_2\rangle$ with respect to the order $\alpha>\beta>\gamma$. Thus $\tip\mathcal{G}$ of $I$ is given by $\{\alpha_{1}\alpha_{2}\alpha_{3},\;\beta_1\beta_2\}$. Denote $u^\vee=([\alpha_{1}|\alpha_{2}\alpha_{3}])^\vee$ and $v^\vee=([\beta_1|\beta_2])^\vee$. By Corollary \ref{Toupie-Ainf}, we have some nonzero products on $\mathbb{E}^*$  which is given by:
	$$m_2(\beta_1^\vee,\beta_2^\vee)=-v^\vee,$$
	$$m_2(\gamma_1^\vee,\gamma_2^\vee)=u^\vee+v^\vee,$$
	$$m_3(\alpha_1^\vee,\alpha_2^\vee,\alpha_{3}^\vee)=u^\vee.$$

	\end{example}
		
	\subsection{More properties on Yoneda algebras}
		
	In \cite{GS}, they give a method to determine of the quiver algebra $A=\mathbbm{k}Q/I$ is a quasi-hereditary algebra. That reminds us to discuss some special case on toupie algebras. 
	
	\begin{proposition}
		Toupie algebras are quasi-hereditary.
	\end{proposition}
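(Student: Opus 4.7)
The plan is to produce an explicit heredity chain for $A=\qa$ by iteratively peeling off the sink of the quiver, using that a toupie quiver is acyclic (every nontrivial path travels from $0$ toward $\omega$, so no oriented cycle can exist). I will work by induction on the number of vertices $|Q_0|$, with the induction hypothesis that every basic algebra whose ordinary quiver is finite and acyclic is quasi-hereditary, so that the statement for toupie algebras is an immediate corollary of the inductive claim.

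For the base of the induction step, I would fix the sink $\omega$ and consider the two-sided ideal $J:=Ae_\omega A$. Since $\omega$ is a sink, the only path starting at $\omega$ is $e_\omega$, which gives $e_\omega A=\mathbbm{k}e_\omega$ and hence $e_\omega\cdot\mathrm{rad}(A)=0$. Two immediate consequences are that $J=Ae_\omega$, so $J$ is the indecomposable projective left module $P(\omega)$ (in particular projective over $A$), and that
\[
J\cdot\mathrm{rad}(A)\cdot J \;=\; Ae_\omega\cdot\mathrm{rad}(A)\cdot Ae_\omega \;\subseteq\; A\bigl(e_\omega\mathrm{rad}(A)\bigr)Ae_\omega \;=\;0.
\]
A direct computation using $e_\omega A=\mathbbm{k}e_\omega$ also yields $J^2=J$. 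This verifies the three defining conditions of a heredity ideal.

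Next I would pass to the quotient and iterate. The key observation is that $A/J\cong\mathbbm{k}Q'/I'$, where $Q'$ is obtained from $Q$ by deleting $\omega$ together with every arrow terminating at $\omega$, and $I'$ is the admissible ideal induced by the surviving generators of $I$; note that every non-monomial relation $\rho_i\in\mathcal{G}_{nomon}$ is a linear combination of branches ending at $\omega$ and therefore dies in $A/J$, while the remaining generators of $\mathcal{G}_{mon}$ either become trivial or descend to honest monomial relations in $\mathbbm{k}Q'$. The new quiver $Q'$ is still finite and acyclic (removing a sink of an acyclic quiver leaves an acyclic quiver), so it again has a sink and the inductive hypothesis applies.

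Assembling the preimages of the heredity chain of $A/J$ together with $J$ itself then produces a heredity chain for $A$, proving the proposition. I do not expect a serious obstacle: the only step that needs a careful check is that $J$ actually satisfies the three heredity conditions at each level, and that identification is done once and for all by the argument above; after that the induction is formal because acyclicity is preserved by deletion of a sink. In particular the vertex order for the heredity chain in a toupie algebra may be taken to be any linear refinement of the path order on $Q_0$ with $\omega$ largest and $0$ smallest.
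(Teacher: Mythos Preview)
Your argument is correct and in fact proves the stronger (and classical) statement that any basic finite-dimensional algebra whose Gabriel quiver is acyclic is quasi-hereditary; the toupie case is then a special instance. The verification that $J=Ae_\omega A=Ae_\omega$ is a heredity ideal is clean, and the identification of $A/J$ with a bound quiver algebra on $Q\setminus\{\omega\}$ is standard once one notes that every path through $\omega$ must end there, so that the induced ideal $I'$ remains admissible (acyclicity of $Q'$ makes $(\mathbbm{k}Q'_1)^N=0$ for large $N$ automatic).

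The paper takes a quite different route: rather than building a heredity chain by hand, it invokes two black-box results of Green--Schroll. First it passes to the associated monomial algebra $A_{\mathrm{Mon}}=\mathbbm{k}Q/\langle\tip\mathcal G\rangle$, then observes that deleting both the source $0$ and the sink $\omega$ from $Q$ leaves a disjoint union of linearly oriented $\mathbb A$-quivers, which are trivially quasi-hereditary; a criterion of Green--Schroll then lifts this to $A_{\mathrm{Mon}}$, and a further theorem of theirs transfers quasi-heredity from $A_{\mathrm{Mon}}$ to $A$ itself. Your approach is more elementary and self-contained---it needs no external references and no passage through the monomial degeneration---while the paper's approach has the virtue of illustrating how the Gr\"obner/monomial machinery already in play elsewhere in the article can be reused. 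Either way the content is the same: the acyclicity of a toupie quiver forces the existence of a heredity chain.
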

		
	\begin{proof}
		Let $A=\qa$ be a toupie algebra. Consider the associated monomial algebra $A_{Mon}=\mathbbm{k}Q/\langle \tip\mathcal{G}\rangle$. By \cite[~Theorem~3.10]{GS}, we can take $v_1=0$, $v_2=w$. Then $Q_{\widehat{v_1+v_2}}$ (the subquiver of Q obtained by removing the vertices $v_1,v_2$) is a disjoint union of quivers which are of type $\mathbb{A}$ with linear orientation. Thus they are quasi-hereditary. This leads to $A_{Mon}$ being quasi-hereditary. Moreover, by \cite[~Theorem~4.1]{GS}, $A$ is quasi-hereditary.
	\end{proof}	
	
	Actually, there are some interesting properties about quasi-hereditary. For example, \cite[~Theorem~2.2.1]{CPS} if an algebra $A$ is quasi-hereditary under some special conditions, the double homological dual of $A$ is isomorphic to its associated graded algebra $gr(A)$. Moreover, in this case, $gr(A)$ must be a Koszul algebra. To be more specific, for a quiver algebra $\qa$, $\qa$ is Koszul if and only if $I$ is generated by some quadratic relations (see for example \cite{BGS}). Now we have known that a toupie algebra $A$ is quasi-hereditary, we need to make a concrete characterization of $gr(A)$ for the following discussion.
		
	Recall the basic definition of $gr(A)$. Let $A$ be a finite dimensional algebra. Denote by $\mathfrak{r}$ the (Jacobson) radical $rad(A)$ of $A$. Then the graded algebra $gr(A)$ of $A$ associated with the radical filtration is defined as follows. As a graded vector space,
	$$gr(A)=A/\mathfrak{r}\oplus\mathfrak{r}/\mathfrak{r}^2\oplus\cdots\oplus\mathfrak{r}^t/\mathfrak{r}^{t+1}\oplus\cdots.$$
	The multiplication of $gr(A)$ is given as follows. For any two homogeneous elements:
	$x+\mathfrak{r}^{m+1}\in\mathfrak{r}^m/\mathfrak{r}^{m+1},\quad y+\mathfrak{r}^{n+1}\in\mathfrak{r}^n/\mathfrak{r}^{n+1},$
	we have $$(x+\mathfrak{r}^{m+1})\cdot(y+\mathfrak{r}^{n+1})=xy+\mathfrak{r}^{m+n+1}.$$

	In above discussions, for a fixed toupie algebra $A=\qa$, we have fixed a finite set of $r$ equations generating the non monomial relations in $I$ and having fixed the order of branches defined in $\{\alpha^{(a+l+m+1)},\cdots,\alpha^{(a+l+m+n)}\}$ with $l(\alpha^{(a+l+m+i)})\geq l(\alpha^{(a+l+m+j)})$ for all $i\leq j$. Let $C=(a_{ij})\in\mathbbm{k}^{r\times n}$ be the matrix whose rows are coefficients of each of these equations and this matrix is already reduced and its rank is $r$. Every non monomial relation will be of the form:
	$$\rho_i=\alpha^{(a+l+m+k_i)}+\sum_{j>k_i}a_{ij}\alpha^{(a+l+m+j)},\quad 1\leq i\leq r.$$
	That means the leftmost nonzero element of each row of the matrix $C$ is $1$ and all the elements above it are zeros. Now we give a special basis of $I$ in $\mathbbm{k}Q$ to help us construct $gr(A)$. We operate on the matrix $C$ by row transformations.
	\begin{itemize}
		\item Consider the rightmost column $k_0$ of $C_0:=C$ which is not zero and $a_{l_0k_0}$ is the bottom-most nonzero element in this column. Then reduced all $a_{l'k_0}$ with $l'<l_0$ to zero by $a_{l_0k_0}$ to get a new matrix $C_1$;
		
		\item Consider the rightmost column $k_1$ with $k_1<k_0$ of $C_1$ which is not zero. Let $a_{l_1k_1}$ be the bottom-most nonzero element in this column such that  all $a_{l_1k''}$ are zeros with $k''>k_1$. If no $a_{l_1l_1}$ fits this condition, the operation ends. If not, reduced all $a_{l'k_1}$ with $l'<l_1$ to zero by $a_{l_1k_1}$ to get a new matrix $C_2$;
		
		\item Repeat these operations. 
	\end{itemize}
	Since the matrix $C$ has $n$ columns with $n$ finite, this process will terminate and we will obtain a unique corresponding matrix $C'$. For example:
	$$C=\begin{pmatrix}
	1&&&&1&&1\\
	&1&&&&1&1\\
	&&1&&1&&\\
	&&&1&&1&	
	\end{pmatrix}\;\Longrightarrow
	\;C'=\begin{pmatrix}
		1&-1&-1&1&&&\\
		&1&&-1&&&1\\
		&&1&&1&&\\
		&&&1&&1&	
	\end{pmatrix}$$
	We note that $C'$ is also corresponding a class of non monomial generating set $\mathcal{G}_{nomon}'$ of $I$. We call the generating set $S=\mathcal{G}_{mon}\cup\mathcal{G}_{nomon}'$ the special basis of $I$ in $\mathbbm{k}Q$.
	
	\begin{lemma}\label{grA}
		Let $A=\qa$ be a toupie algebra with the special basis $S$ of $I$. For each relation $\rho\in S$, $$\rho=\beta_0+\sum_{i=1}^{k}c_i\beta_i$$ with $l(\beta_{i_1})\geq l(\beta_{i_2})$, $i_1\leq i_2$ and $c_0=1$. Then there exists $0\leq k_0\leq k$, such that for all $j\geq k_0$, $l(\beta_{j})=l(\beta_{k_0})$. Denote by $$\rho'=\beta_{k_0}+\sum_{i=k_0+1}^{k}c_{k_0}^{-1}c_i\beta_i.$$
		Then, the associated graded algebra $gr(A)$ is isomorphic to $\mathbbm{k}Q/I'$, where $I'$ is an admissible ideal whose generating relations are obtained from that of $I$ by replacing each $\rho$ by $\rho'$.
	\end{lemma}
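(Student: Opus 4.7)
The plan is to identify $gr(A)$ with $\mathbbm{k}Q/I^*$, where $I^*$ is the associated graded ideal of $I$ with respect to the path-length filtration on $\mathbbm{k}Q$ (noting that, since $I$ is admissible, $\mathfrak{r}^n$ equals the image of $\mathbbm{k}Q_{\geq n}$ in $A$), and then to show $I^* = I'$. First, every generator of $I'$ lies in $I^*$: the monomial relations are already homogeneous, and each $\rho'$ is by construction the normalized leading (lowest-degree) form of $\rho \in \mathcal{G}_{nomon}'$. This provides a surjective graded algebra morphism $\pi : \mathbbm{k}Q/I' \twoheadrightarrow gr(A)$, and since both algebras are finite-dimensional (the toupie quiver being acyclic), it suffices to show $\dim_\mathbbm{k} I' = \dim_\mathbbm{k} I$.

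Next, I would decompose both ideals using the rigid structure of a toupie quiver. Because $Q$ has a unique source $0$ and a unique sink $\omega$, for any non-monomial relation $\rho$ (supported on branches from $0$ to $\omega$) the product $p\rho q$ is zero unless $p = e_0$ and $q = e_\omega$, in which case it equals $\rho$; so multiplication by paths produces no new non-monomial elements. Consequently
$$I \;=\; \langle\mathcal{G}_{mon}\rangle \oplus \mathbbm{k}\text{-span}(\mathcal{G}_{nomon}'), \qquad I' \;=\; \langle\mathcal{G}_{mon}\rangle \oplus \mathbbm{k}\text{-span}\{\rho' : \rho \in \mathcal{G}_{nomon}'\},$$
where the common monomial part $\langle\mathcal{G}_{mon}\rangle$ is supported on non-branch paths and $B_3$-branches, while both non-monomial parts are supported on $B_4$-branches; the sums are direct because these two supports are disjoint. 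The equality $\dim I = \dim I'$ thus reduces to showing that the $r$ leading forms $\{\rho'_1,\ldots,\rho'_r\}$ are linearly independent.

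The main obstacle is this linear-independence claim, which is precisely the content of the right-to-left reduction producing the special basis. I would first verify that the procedure terminates with $r$ pivots $(l_i,k_i)$ at distinct columns (row operations preserve the rank $r$); by the selection rule, each pivot row $l_i$ has zero entries strictly to the right of $k_i$, i.e., $\rho_i$ has zero coefficient on every $B_4$-branch shorter than $\beta_{k_i}$. Since the columns are indexed by branches of non-increasing length, $\beta_{k_i}$ is the shortest branch appearing in $\rho_i$ and therefore lies in the support of $\rho'_i$ with coefficient $1$. Reorder so that $k_1 < k_2 < \cdots < k_r$. Then $\beta_{k_r}$ is absent from every $\rho_j$ with $j < r$ (hence also from $\rho'_j$), so the coefficient of $\beta_{k_r}$ in any relation $\sum d_i \rho'_i = 0$ equals $d_r$, forcing $d_r = 0$. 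Iterating from the right yields $d_i = 0$ for all $i$, proving independence. Combined with the previous steps, $\dim I' = \dim I$, so $\pi$ is an isomorphism and $gr(A) \cong \mathbbm{k}Q/I'$.
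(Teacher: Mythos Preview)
Your proposal is correct and follows the same two-step strategy as the paper: first observe that each $\rho'$ vanishes in $gr(A)$ (being the lowest-degree homogeneous component of a relation $\rho$), yielding a surjection $\mathbbm{k}Q/I'\twoheadrightarrow gr(A)$, and then conclude by a dimension count. The paper's own proof simply asserts $\dim_{\mathbbm{k}}(\mathbbm{k}Q/I')=\dim_{\mathbbm{k}}(A)$ without justification, whereas you supply the missing argument---namely the toupie-specific decomposition $I=\langle\mathcal{G}_{mon}\rangle\oplus\mathbbm{k}\text{-span}(\mathcal{G}_{nomon}')$ and the linear independence of the leading forms $\rho'_i$, which is precisely what the right-to-left reduction defining the special basis is designed to guarantee (each row of $C'$ acquires a distinct rightmost nonzero column, and that column indexes a shortest branch appearing in $\rho'_i$). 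One small caution: the phrase ``the procedure terminates with $r$ pivots'' is slightly stronger than what the algorithm as written ensures; what you actually need and use is that the $r$ rows of $C'$ have pairwise distinct trailing nonzero columns, which follows since $C'$ has full row rank and the algorithm clears each processed column above its chosen row.
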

		
	\begin{proof}
		It is obvious to see that each $\rho'$ is homogeneous and becomes zero in $gr(A)$ by its definition (for example, if $\rho'\in \mathfrak{r}^m\backslash\mathfrak{r}^{m+1}$, then $(\rho-c_{k_0}\rho')\in \mathfrak{r}^{m+1}$ becomes zero in $\mathfrak{r}^m/\mathfrak{r}^{m+1}$. Moreover, since $\rho=0$ in $A$, $\rho=0$ in $\mathfrak{r}^m$. That means $\rho'=c_{k_0}^{-1}(\rho-(\rho-c_{k_0}\rho'))=c_{k_0}^{-1}\rho=0$ in $\mathfrak{r}^m/\mathfrak{r}^{m+1}$). Moreover, $\dim_\mathbbm{k}(\mathbbm{k}Q/I')=\dim_\mathbbm{k}(A)=\dim_\mathbbm{k}(gr(A))$.
	\end{proof}
	
	Now we give the main result of this subsection. Recall the non oriented graph of each toupie algebra discussed in \cite[~Definition~4]{ALS}. For a toupie algebra $A=\qa$, the non oriented graph $\Gamma(A)=(\Gamma_0,\Gamma_1)$ is given by 
	$$\Gamma_0=\{\text{the branches in Q involved in non monomial relations};\}$$
	$$\Gamma_1=\{(\alpha^{(p)},\alpha^{(q)})\;|\;\text{there exists a relation that involves both $\alpha^{(p)}$ and $\alpha^{(q)}$}\}.$$
	
	\begin{theorem}\label{double-dual}
		Let $A=\qa$ be a toupie algebra, with $\mathcal{G}$ as the minimal generating set of $I$, $gr(A)$ as the associated graded algebra of $A$, and $\Gamma(A)$ as the non oriented graph of $A$. If all the monomial relations in $\mathcal{G}$ are quadratic, and all the non monomial relations involved in each connected component of $\Gamma(A)$ have a quadratic tip except for at most one exception, then $A^{!!}\cong gr(A)$.
	\end{theorem}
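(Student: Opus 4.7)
My plan is to reduce the statement to the theorem of Cline--Parshall--Scott already invoked in the paper: \cite[Theorem~2.2.1]{CPS} asserts that if $A$ is a quasi-hereditary algebra satisfying suitable Koszul-type hypotheses, then $A^{!!}\cong gr(A)$ and $gr(A)$ is Koszul. The preceding proposition establishes that every toupie algebra is quasi-hereditary, so what remains is to check the Koszul-type hypothesis, which amounts to showing that the presentation of $gr(A)$ supplied by Lemma \ref{grA} is quadratic and that the resulting quadratic algebra is Koszul.

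The first step is to verify that $gr(A)$ is quadratic. By Lemma \ref{grA}, $gr(A)\cong\mathbbm{k}Q/I'$, where $I'$ is generated by $\mathcal{G}_{mon}$ together with the homogeneous truncations $\rho'$ of the relations in the special basis. The monomial generators are quadratic by hypothesis. For a non-monomial $\rho$ whose tip is quadratic, the ordering of branches by length forces every summand of $\rho$ to be a length-$2$ path, so $\rho$ is already homogeneous of degree $2$ and $\rho'=\rho$. The substantive case is a connected component of $\Gamma(A)$ containing the exceptional relation $\rho_0$ of non-quadratic tip. Here I would analyze the reverse row-reduction that defines the special basis: the quadratic-tip relations in the same component share length-$2$ branches with $\rho_0$ by the definition of $\Gamma(A)$, and the algorithm uses the rightmost (i.e.\ shortest-branch) columns as pivots, producing a modified relation $\widetilde{\rho}_0$ whose shortest-length summands remain among length-$2$ branches. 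Consequently $\rho_0'$ is a linear combination of length-$2$ paths and hence is quadratic.

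Next I would show that this quadratic $gr(A)$ is Koszul. The natural route is to exhibit a quadratic Gr\"obner basis for $I'$ with respect to the same tip order used in Section 4.2: the tips of the monomial generators and of the new $\rho'$ are all length-$2$ paths, and since distinct branches in a toupie quiver share no internal arrow there are no non-trivial overlap ambiguities to resolve. Hence $\mathbbm{k}Q/I'$ is a PBW algebra, and in particular Koszul. At this point the hypotheses of \cite[Theorem~2.2.1]{CPS} apply to $A$ and yield $A^{!!}\cong gr(A)$.

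The main obstacle I anticipate is the analysis of the exceptional connected component in the quadratic-reduction argument: one must check that the reverse row reduction does not force $\rho_0'$ to land in degree strictly greater than $2$, i.e.\ that at least one length-$2$ coefficient survives all cancellations. This is precisely where the hypothesis that each connected component of $\Gamma(A)$ contains at most one exceptional relation is essential, because the presence of a second non-quadratic-tip relation would provide extra freedom in the reduction that could eliminate every length-$2$ contribution of $\rho_0$ and destroy quadraticity of $I'$.
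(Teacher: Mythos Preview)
Your proposal takes a genuinely different route from the paper, and it carries a real gap.

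The paper does \emph{not} invoke \cite[Theorem~2.2.1]{CPS} in the proof; that reference appears only as motivation in the introduction. Instead, the paper argues directly from the explicit $A_\infty$-formulas of Corollary~\ref{Toupie-Ainf}. From those formulas one sees that the product $m_2$ on $\mathbb{E}^*$ depends only on the length-$2$ subpaths appearing in the relations of $\mathcal{G}$; the hypothesis on $\Gamma(A)$ is used to ensure that the quiver of $A^!$ is $Q^{op}$ and that its relations are quadratic. Since by Lemma~\ref{grA} the relations of $gr(A)$ are precisely these length-$2$ parts, one obtains $A^!\cong (gr(A))^!$ directly, hence $A^{!!}\cong (gr(A))^{!!}$, and then Koszulity of $gr(A)$ gives $(gr(A))^{!!}\cong gr(A)$. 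No appeal to CPS is needed.

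Your plan, by contrast, hinges on checking the ``suitable Koszul-type hypotheses'' of the CPS theorem, which you never state. The actual hypotheses of \cite[Theorem~2.2.1]{CPS} involve the existence of a graded Kazhdan--Lusztig theory, not merely that $A$ is quasi-hereditary with $gr(A)$ Koszul; reducing to quadraticity of $I'$ and a PBW argument does not obviously discharge them. So even if your analysis of $gr(A)$ were complete, the final step ``the hypotheses of \cite[Theorem~2.2.1]{CPS} apply'' is not justified.

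There is also a gap in your quadraticity argument for the exceptional relation. You assert that ``the quadratic-tip relations in the same component share length-$2$ branches with $\rho_0$'', but connectedness in $\Gamma(A)$ only gives a chain of relations with overlapping branches, not a branch common to $\rho_0$ and some quadratic-tip relation. In particular, when $\rho_0$ is the sole relation in its component, nothing in the hypothesis forces any branch of $\rho_0$ to have length $2$, so your claim that $\rho_0'$ is quadratic is unproven as stated. You correctly identify this as the delicate point, but you do not resolve it; the paper's approach sidesteps the issue because it compares $A^!$ and $(gr(A))^!$ via the explicit $m_2$, rather than trying to read off $gr(A)$ first.
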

	
	\begin{proof}
		Since all the non monomial relations involved in each connected component of $\Gamma(A)$ have quadratic tip except at most one exception, by Corollary \ref{Toupie-Ainf}, the quiver of $A^!$ is equal to the opposite quiver of $Q$ and the multiplication $m_2$ in $A^!$ is always zero except to the quadratic paths involved in the minimal generating set $\mathcal{G}$ of $I$. Therefore, all the relations in $A^!$ are quadratic. By the formula in Corollary \ref{Toupie-Ainf}, only paths of length two in the relations affect the multiplication $m_2$ in $A^1$. Therefore, $A^!\cong (gr(A))^!$. Thus $A^{!!}\cong (gr(A))^{!!}$. Moreover, in this case, $gr(A)$ is Koszul, then $gr(A)\cong (gr(A))^{!!}$. 
	\end{proof}
	
	In this case, we can regard $gr(A)$ as the Ext complex of $A^!$ after performing the operations of transfer theory twice. Unfortunately, since we do not have a corresponding theory about the bar complex of an $A_\infty$-algebra, we lose the information about the higher products on $\mathbb{E}^*$ when we perform the second transformation. By Corollary \ref{Toupie-Ainf}, since $A^!$ is Koszul, the $A_\infty$-structure on the Ext complex of $A^!$ is trivial ($m_n=0$ for $n\neq 2$). Due to those lost information, we cannot reconstruct $A$ from $gr(A)$ solely based on  this simple characterization of $gr(A)$.
	
	\begin{example}
		We continue to compute the toupie algebra in Example \ref{ex}. In this example ,  all the non monomial relations involved in each connected component of $\Gamma(A)$ have quadratic tip except $\alpha_{1}\alpha_{2}\alpha_{3}-\gamma_1\gamma_2$. Thus $A^!$ is Koszul, and we have already computed that $A^!=\mathbbm{k}Q^{op}/\langle \alpha_{1}\alpha_{2},\alpha_{2}\alpha_{3}\rangle$. Now consider $A^{!!}$ whose quiver is $(Q^{op})^{op}=Q$. Actually, by Corollary \ref{Toupie-Ainf}, we have $m_2(\beta_1,\beta_2)=0$ and $m_2(\gamma_1,\gamma_2)=0$.
		Therefore, we have $A^{!!}=\mathbbm{k}Q/\langle\beta_1\beta_2,\gamma_1\gamma_2\rangle$
	
		By Lemma \ref{grA} , $gr(A)=\mathbbm{k}Q/\langle\beta_1\beta_2,\beta_1\beta_2-\gamma_1\gamma_2\rangle$.	
		Thus $A^{!!}=gr(A)$, just like Theorem \ref{double-dual} says.
	\end{example}


	\end{document}